\pgfplotsset{width=5cm,compat=1.16}
\newtheorem {theorem}{Theorem}
\newtheorem {corollary}{Corollary}
\newtheorem {example}{Example}
\newtheorem {lemma}{Lemma}
\newtheorem {proposition}{Proposition}
\newtheorem {remark}{Remark}
\newenvironment {proof}[1][Proof]{\noindent \textbf {#1.} }{\ \rule {0.5em}{0.5em}}
\begin{document}

\setlength{\abovedisplayskip}{3pt}
\setlength{\belowdisplayskip}{3pt}

\title{Concentration inequalities using higher moments information}
\author{Bar Light\protect\footnote{ 
Microsoft Research, NY, USA. e-mail: \textsf{barlight@microsoft.com}\ }
~ ~
}
\maketitle

\thispagestyle{empty}

\noindent \noindent \textsc{Abstract}:

  In this paper, we generalize and improve some fundamental concentration inequalities using information on the random variables' higher moments. In particular, we improve the classical Hoeffding's and Bennett's inequalities for the case where there is some information on the random variables' first $p$ moments for every positive integer $p$. Importantly, our generalized Hoeffding's inequality is tighter than Hoeffding's inequality and is given in a simple closed-form expression for every positive integer $p$. Hence, the generalized Hoeffding's inequality is easy to use in applications. To prove our results, we derive novel upper bounds on the moment-generating function of a random variable that depend on the random variable's first $p$ moments and show that these bounds satisfy appropriate convexity properties.

\noindent {\small Keywords: Concentration inequalities, Hoeffding's inequality, Bennett's inequality, moment-generating function.  }  \\\relax
\newpage

\section{Introduction}

Concentration inequalities provide bounds on the probability that a random variable differs from some value, typically the random variable's expected value  (see \cite{boucheron2013concentration} for a textbook treatment of concentration inequalities). Besides their importance in probability theory,  concentration inequalities are an important mathematical tool in statistics and operations research (see \cite{massart2000some}), the analysis of algorithms and machine learning theory (see \cite{alon2004probabilistic} and \cite{mohri2018foundations}) and many other fields.  Two of the most important and useful concentration inequalities are Hoeffding's inequality \citep{hoeffding1994probability} and Bennett's inequality \citep{bennett1962probability}. These are inequalities that bound the probability that the sum of independent random variables differs from its expected value. The bound derived in Hoeffding's inequality holds for bounded random variables and uses information on the random variables' first moment. The bound derived in Bennett's inequality holds for random variables that are bounded from above and uses information on the random variables' first and second moments. Despite their importance and numerous generalizations\footnote{There are many extensions and generalizations of Hoeffding's and Bennett's inequalities. For example see \cite{freedman1975tail},  \cite{pinelis1994optimum},  \cite{talagrand1995missing}, \cite{roussas1996exponential},  \cite{cohen1999optimal}, \cite{victor1999general}, \cite{bousquet2002bennett}, \cite{bentkus2004hoeffding}, \cite{klein2005concentration}, \cite{kontorovich2008concentration}, \cite{fan2012hoeffding},  \cite{junge2013noncommutative},   \cite{pinelis2014bennett}, \cite{paulin2015concentration}, \cite{pelekis2015bernstein}, \cite{jiang2018bernstein}, and \cite{pepin2021concentration}. }, there are not many 
improvements even for the basic case of sums of independent real-valued random variables \citep{pinelis2014bennett}, especially concentration bounds that use information on higher order moments and are given as a simple closed-form expression. 

In this paper we generalize and improve Bennett's and Hoeffding's inequalities. We provide bounds that use information on the random variables' higher moments. More precisely, we provide bounds on the probability that the sum of independent random variables differs from its expected value where the bounds depend on the random variables' first $p$ moments for every integer $p \geq 1$. We provide two families of concentration inequalities, one that generalizes Hoeffding's inequality and one that generalizes Bennett's inequality. Importantly, the bounds that we derive are tighter than Bennett's and Hoeffding's inequalities and are given as closed-form expressions in most cases.  In our generalized Hoeffding's inequality, our bounds hold for bounded random variables and are given as simple closed-form expressions (see Theorem \ref{Thm: concent hof}) for every integer $p \geq 1$. In our generalized Bennett's inequality, our bounds hold for random variables that are bounded from above. For $p=3$, our bound is given in a closed-form expression in terms of the Lambert $W$-function. This bound uses information on the random variables' first three moments and is tighter than Bennett's inequality. For $p > 3$ our bounds are given in terms of the generalized Lambert $W$-function (see Theorem \ref{Thm: concent bennett}). 


For every positive integer $p$, independent random variables $X_{1},\ldots,X_{n}$ such that $\mathbb{P}(X_{i} \in [a_{i},b_{i}] = 1)$, and all $t>0$,  our generalized Hoeffding's inequality is given by 
\begin{equation*}
    \mathbb{P}(S_{n}-\mathbb{E}(S_{n}) \geq t) 
      \leq \exp \left (-\frac{2t^{2}}{\sum _{i=1}^{n} (b_{i} - a_{i})^{2}  C_{p}(t,X_{i}) } \right )
\end{equation*}
where $S_{n}= \sum _{i=1}^{n} X_{i}$ and $C_{p}(t,X_{i})$ is a function that depends on $t$, on the first $p$ moments of $X_{i}$,  and on $X_{i}$'s support: $[a_{i},b_{i}]$. We show that for every positive integer $p$ we have $C_{p} \leq 1$. Thus,  our generalized Hoeffding's inequality is tighter than Hoeffding's inequality which corresponds to $p=1$ and $C_{1}=1$. We provide a simple closed-form expression for the function $C_{p}$ for any integer $p \geq 1$. For example, suppose that the support of a random variable $X$ is $[0,b]$ for some $X=X_{i}$, $i=1,\ldots,n$. Then $C_{p}(t,X)$ is given by 
\begin{equation*}
    C_{p}(t,X) = \left ( \frac{\mathbb{E}(X^{p}) \exp(y) + \sum _{j=0} ^{p-3} \frac{y^{j}} {j!} \left (  b^{p-j-2}\mathbb{E}(X^{j+2}) - \mathbb{E}(X^{p}) \right )}{\mathbb{E}(X^{p})\exp(y) + \sum _{j=0} ^{p-2} \frac{y^{j}} {j!} \left (  b^{p-j-1}\mathbb{E}(X^{j+1}) - \mathbb{E}(X^{p}) \right )} \right )^{2}
\end{equation*}
where $y = 4tb/ \sum_{i=1}^{n} d (X_{i})$ and $d(X_{i}) = \left (\mathbb{E}(X_{i} ^{2} )/ \mathbb{E}(X_{i}) \right )^{2} $
(see Theorem \ref{Thm: concent hof}). 
We note that our generalized Hoeffding's bounds are exponential bounds, and hence, these bounds are not optimal in the sense that there is a missing factor in those bounds (see \cite{talagrand1995missing}). However, we show that the results in \cite{talagrand1995missing} can be easily adapted to our setting to obtain a concentration bound of optimal order that uses information about the random variables' higher moments. In addition, our bounds can be generalized for martingales and other stochastic processes in a standard way

To prove our concentration bounds we derive novel upper bounds on the random variable's moment-generating function that depend on the random variable’s first $p$ moments. These bounds  satisfy appropriate convexity properties that imply that we can derive a closed-form expression concentration bounds.

\section{Main results}

In this section we state our main results. In Section \ref{Sec: bounds on the MGF} we derive upper bounds on the moment-generating function of a random variable that is bounded from above. In Section \ref{Sec: concert Hoeffiding} we derive our generalized Hoeffding's inequalities. In Section \ref{Sec: concert Bennet} we derive our generalized Bennett's inequalities. 

We first introduce some notations.

Throughout the paper we consider a fixed probability space $\left (\Omega ,\mathcal{F} ,\mathbb{P}\right )$. A random variable $X$ is a measurable real-valued function from $\Omega $ to $\mathbb{R}$. We denote the expectation of a random variable on the probability space $\left (\Omega  ,\mathcal{F} ,\mathbb{P}\right )$ by $\mathbb{E}$. For $1 \leq p \leq \infty $ let $L^{p} : =L^{p}\left (\Omega  ,\mathcal{F} ,\mathbb{P}\right )$ be the space of all random variables $X :\Omega  \rightarrow \mathbb{R}$ such that $\left \Vert X\right \Vert _{p}$ is finite, where $\left \Vert X\right \Vert _{p} =\left (\int _{\Omega }\left \vert X(\omega )\right \vert ^{p} \mathbb{P}(d \omega) \right  )^{1/p}$ for $1 \leq p <\infty $ and $\left \Vert X\right \Vert _{p} =\operatorname{ess\,sup}\left \vert X(\omega )\right \vert $ for $p =\infty $. We say that $X$ is a random variable on $[a,b]$ for some $a<b$ if $\mathbb{P}(X \in [a,b]) = 1$.

For $k\geq 1$, we denote by $f^{(k)}$ the $k$th derivative of a $k$ times differentiable function $f :[a ,b] \rightarrow \mathbb{R}$ and for $k=0$ we define $f^{(0)}:=f$. As usual, the derivatives at the extreme points $f^{(k)}(a)$ and $f^{(k)}(b)$ are defined by taking the left-side and right-side limits, respectively. We say that $f$ is increasing if $f(x) \geq f(y)$ for all $x \geq y$. 

For the rest of the paper, for every positive integer $p$, we define  $$T_{p}(x):= \exp (x) - \sum _{j=0} ^{p-2} \frac { x^{j} } {j!}$$ 
to be the Taylor remainder of the exponential function of order $p-2$ at the point $0$.  We use the convention that $\sum _{j=0} ^{k} a_{j} =0$ whenever $k < 0$ so $T_{1}(x) = \exp(x)$. The function $T_{p}$ plays an important role in our analysis.  

\subsection{Upper bounds on the moment-generating function} \label{Sec: bounds on the MGF}

In this section we provide upper bounds on the moment-generating function of a random variable that is bounded from above. We show that
\begin{equation} \label{Ineq: Wp and Wb}
    \frac{T_{p+1}(x)}{T_{p+1}(b)} \leq  \frac{\max (x^{p},0)}{b^{p}}
\end{equation} 
for all $x \leq b$, $b > 0$ and every positive integer $p$ (see the proof of Theorem \ref{Coro: exp upper bound}). This bound on the ratio of the Taylor remainders is the key ingredient in deriving the upper bounds on the moment-generating function. The proof of Bennett's inequality uses inequality (\ref{Ineq: Wp and Wb}) with $p=2$ to bound the moment-generating function (see \cite{boucheron2013concentration}). We use inequality (\ref{Ineq: Wp and Wb}) to provide upper bounds on the moment-generating function using information on the random variable's first $p$ moments for every positive integer $p$. Section \ref{Sec: Proofs}  contains the proofs not presented in the main text. 

\begin{theorem} \label{Coro: exp upper bound}
Let $X \in L^{p-1}$ be a random variable on $(-\infty ,b]$ for some $b>0$ where $p$ is a positive integer. For all $s \geq 0$ we have
\begin{align} \label{Ineq: MGF bound}
\begin{split}
  \mathbb{E} \exp(sX) & \leq \frac{\mathbb{E} \max(X^{p},0)}{b^{p}}\left (\exp(sb) - \sum _{j=0} ^{p-1} \frac{s^{j}b^{j}}{j!}  \right )+ \mathbb{E} \left ( \sum _{j=0} ^{p-1} \frac{s^{j}X^{j}} {j!} \right )   \\
   & =  \frac{\mathbb{E} \max (X^{p},0) }{b^{p}} T_{p+1}(sb)  + \mathbb{E} \left ( \sum _{j=0} ^{p-1} \frac{s^{j}X^{j}} {j!} \right ). 
   \end{split}
\end{align}  
\end{theorem}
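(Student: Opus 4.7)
The plan is to reduce the theorem to the pointwise inequality (\ref{Ineq: Wp and Wb}), applied with $x$ replaced by $sX$ and $b$ replaced by $sb$, and then take expectations.

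First I would establish the pointwise bound $T_{p+1}(x)\, b^{p} \leq \max(x^{p},0)\, T_{p+1}(b)$ for all $x \leq b$ and $b > 0$. The cleanest route uses the integral form of the Taylor remainder,
$$T_{p+1}(x) = \frac{1}{(p-1)!} \int_{0}^{x} (x-t)^{p-1} e^{t}\, dt = \frac{x^{p}}{(p-1)!} \int_{0}^{1} (1-u)^{p-1} e^{xu}\, du,$$
where the second equality follows from the substitution $t = xu$ and is valid for every $x \in \mathbb{R}$. Setting $g(x) := \frac{1}{(p-1)!}\int_{0}^{1}(1-u)^{p-1} e^{xu}\, du > 0$, one has $T_{p+1}(x) = x^{p} g(x)$, and differentiating under the integral sign gives $g'(x) = \frac{1}{(p-1)!}\int_{0}^{1} u(1-u)^{p-1} e^{xu}\, du > 0$, so $g$ is strictly increasing on $\mathbb{R}$. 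Three cases then cover the pointwise claim: if $0 \leq x \leq b$, then $g(x) \leq g(b)$ rearranges to the bound with $\max(x^{p},0) = x^{p}$; if $x < 0$ and $p$ is even, the same rearrangement works since $x^{p} > 0$; if $x < 0$ and $p$ is odd, then $T_{p+1}(x) = x^{p} g(x) < 0 \leq (\max(x^{p},0)/b^{p})\, T_{p+1}(b)$, so the inequality is trivial.

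Given the pointwise bound, the theorem follows by applying it with $x = sX$ and $b$ replaced by $sb$. For $s = 0$ both sides of (\ref{Ineq: MGF bound}) equal $1$; for $s > 0$, $sX \leq sb$ almost surely, and the pointwise bound yields
$$\exp(sX) - \sum_{j=0}^{p-1}\frac{s^{j}X^{j}}{j!} \;=\; T_{p+1}(sX) \;\leq\; \frac{\max(X^{p},0)}{b^{p}}\, T_{p+1}(sb) \quad \text{a.s.}$$
Taking expectations (the polynomial terms are integrable by $X \in L^{p-1}$, and the right-hand side is non-negative, possibly $+\infty$, in which case the bound holds trivially) yields (\ref{Ineq: MGF bound}). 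The main obstacle is proving monotonicity of $g$ on all of $\mathbb{R}$: the series $T_{p+1}(x) = \sum_{j \geq p} x^{j}/j!$ has signed terms for $x < 0$, so monotonicity of the ratio $T_{p+1}(x)/x^{p}$ is not transparent from the series itself, whereas the integral representation gives a uniform argument across the sign of $x$ and the parity of $p$.
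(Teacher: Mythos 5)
Your proof is correct, and it shares the paper's overall architecture: both arguments reduce the theorem to the pointwise inequality $T_{p+1}(x)/T_{p+1}(b) \leq \max(x^{p},0)/b^{p}$ for all $x \leq b$ (inequality (\ref{Ineq: Wp and Wb})), apply it with $x=sX$ and $b$ replaced by $sb$, and take expectations. Where you genuinely differ is in how that pointwise inequality is established. The paper studies $g(x)=T_{p+1}(x)/x^{p}$ and proves two separate facts: that $g(x)\leq g(0)=1/p!$ for $x<0$, via a parity argument on the signs of the remainders $T_{p+2}$, and that $g$ is increasing on $(0,\infty)$, via iterated applications of the L'Hospital rule for monotonicity (Step 2 of the paper's proof). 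You instead use the integral form of the Taylor remainder to write $T_{p+1}(x)=x^{p}g(x)$ with $g(x)=\frac{1}{(p-1)!}\int_{0}^{1}(1-u)^{p-1}e^{xu}\,du$, and differentiation under the integral sign shows at once that $g$ is positive and strictly increasing on all of $\mathbb{R}$; the case analysis then reduces to the sign of $x^{p}$. Your intermediate claim is slightly stronger than the paper's (monotonicity of $g$ on the whole line rather than only on $(0,\infty)$ together with the bound $g\leq g(0)$ on the negatives) and is obtained in one stroke, at the mild cost of invoking the integral remainder formula rather than purely elementary series and derivative manipulations. The remaining steps --- the treatment of $s=0$, the cancellation $\max((sx)^{p},0)/(sb)^{p}=\max(x^{p},0)/b^{p}$ for $s>0$, and the integrability remarks (the polynomial part is controlled by $X\in L^{p-1}$, and the non-negative remainder term makes the bound vacuous if its expectation is infinite) --- are all in order.
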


Theorem \ref{Coro: exp upper bound} provides a unified approach for seemingly independent bounds on the moment-generating function that were derived in previous literature and used to prove concentration inequalities. 

For $p=2$, and for a random variable $X$ on $(-\infty,b]$, Theorem \ref{Coro: exp upper bound} yields the inequality 
\begin{equation} \label{Ineq: Bound on MGF p=2}
  \mathbb{E}\exp(sX) \leq   \frac{\mathbb{E}(X^{2})}{b^{2}}\left (\exp(sb) - 1 - sb \right ) + 1 +  s\mathbb{E}(X) 
\end{equation}
which is fundamental in proving Bennett's inequality (see \cite{bennett1962probability}). For $p=3$, denoting $\mu^{3}=\mathbb{E} \max(X^{3},0)$, we have 
\begin{align*} 
\begin{split}
  \frac{\mu^{3}}{b^{3}}T_{4}(sb) +   \mathbb{E} \left ( \sum _{j=0} ^{2} \frac{s^{j}X^{j}} {j!} \right )  & =   \frac{\mu^{3}}{b^{3}}T_{3}(sb)  + 1 + s\mathbb{E}(X) + \frac{s^{2}}{2}\left (\mathbb{E}(X^{2}) -  \frac{\mu^{3}}{b} \right )  \\
   & \leq \exp \left (\frac{\mu^{3}}{b^{3}}T_{3}(sb)  + s\mathbb{E}(X) + \frac{s^{2}}{2}\left (\mathbb{E}(X^{2}) -  \frac{\mu^{3}}{b} \right ) \right ). 
   \end{split}
\end{align*}  
The last inequality follows from the elementary inequality $1  + x \leq \exp(x) $ for all $x \in \mathbb{R}$. Thus, Theorem \ref{Coro: exp upper bound} implies
$$  \mathbb{E} \exp(sX) \leq \exp \left (\frac{\mathbb{E} \max(X^{3},0)}{b^{3}}T_{3}(sb)  + s\mathbb{E}(X) + \frac{s^{2}}{2}\left (\mathbb{E}(X^{2}) -  \frac{\mathbb{E} \max(X^{3},0)}{b^{3}} \right ) \right )$$
which is proved in Theorem 2 in \cite{pinelis1990exact}. 

For a random variable $X$ on $[0,b]$ let
 $$m_{X,s}(p) := \frac{\mathbb{E} \max (X^{p},0) }{b^{p}} T_{p+1}(sb)  + \mathbb{E} \left ( \sum _{j=0} ^{p-1} \frac{s^{j}X^{j}} {j!} \right ) $$ 
be the right-hand side of inequality (\ref{Ineq: MGF bound}). The next proposition shows that for every even number $p$ and $s >0$ we have $m_{X,s}(p) \geq m_{X,s}(p+1)$. If, in addition, the random variable $X$ is non-negative, then we also have $m_{X,s}(p+1) \geq m_{X,s}(p+2)$, and hence, $m_{X,s}(p)$ is decreasing. Thus, for non-negative random variables, inequality (\ref{Ineq: MGF bound}) is tighter when $p$ increases. In particular, we have $m_{X,s}(2) \geq m_{X,s}(p)$ for every integer $p \geq 3$, i.e., the bound on the moment-generating function given in inequality (\ref{Ineq: MGF bound}) is tighter than Bennett's bound (\ref{Ineq: Bound on MGF p=2}) for every integer $p \geq 3$ when $X$ is non-negative.

\begin{proposition} \label{prop: m(p)}
 Let $X \in L^{p}$ be a random variable on $(-\infty,b]$. Let $p \geq 2$ be an even number and $s > 0$. The following statements hold: 

(i) $m_{X,s}(p) \geq m_{X,s}(p+1)$. 

(ii) If $X \geq 0$ then $m_{X,s}(p+1) \geq m_{X,s}(p+2)$. 
\end{proposition}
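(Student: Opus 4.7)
\medskip

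\noindent\textbf{Proof proposal.}
The plan is to compute the telescoping difference $m_{X,s}(q)-m_{X,s}(q+1)$ for an arbitrary positive integer $q$, express it in a form whose sign is transparent, and then specialize to $q=p$ (even) for part (i) and to $q=p+1$ for part (ii). The one identity that drives everything is the recursion for Taylor remainders,
\begin{equation*}
T_{q+1}(sb) \;=\; T_{q+2}(sb) + \frac{(sb)^{q}}{q!},
\end{equation*}
which follows directly from the definition of $T_{p}$ given in the paper.

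Substituting this into the formula for $m_{X,s}(q)$ and subtracting $m_{X,s}(q+1)$, the Taylor sums $\mathbb{E}\bigl(\sum_{j=0}^{q-1} s^{j}X^{j}/j!\bigr)$ and $\mathbb{E}\bigl(\sum_{j=0}^{q} s^{j}X^{j}/j!\bigr)$ telescope neatly, and the $(sb)^{q}/q!$ factor combines with the extra monomial term. I expect the resulting identity to be
\begin{equation*}
m_{X,s}(q)-m_{X,s}(q+1)
\;=\; T_{q+2}(sb)\left(\frac{\mathbb{E}\max(X^{q},0)}{b^{q}}-\frac{\mathbb{E}\max(X^{q+1},0)}{b^{q+1}}\right)
\;+\; \frac{s^{q}}{q!}\bigl(\mathbb{E}\max(X^{q},0)-\mathbb{E}(X^{q})\bigr).
\end{equation*}
Given this identity, part (i) is the case $q=p$ with $p$ even: then $X^{p}\geq 0$ almost surely, so $\mathbb{E}\max(X^{p},0)=\mathbb{E}(X^{p})$ and the second summand vanishes. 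Part (ii) is the case $q=p+1$ under the extra hypothesis $X\geq 0$: then all the $\max$'s in both summands are redundant, and again the second summand vanishes.

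It then remains to argue that the first summand is nonnegative. Since $s\geq 0$ and $b>0$, the series expansion of $T_{q+2}$ at $0$ shows $T_{q+2}(sb)\geq 0$. For the parenthetical factor, I claim the pointwise inequality $b X^{q}\geq \max(X^{q+1},0)$ holds whenever $X\leq b$ and $X^{q}\geq 0$: indeed, if $X\geq 0$ then $bX^{q}\geq X\cdot X^{q}=X^{q+1}\geq 0$, while if $X<0$ then $\max(X^{q+1},0)=0\leq b X^{q}$ (note that in part (i), $q=p$ is even, so $X^{q}\geq 0$ automatically, and in part (ii) we have $X\geq 0$ by assumption). Taking expectations and dividing by $b^{q+1}$ gives the desired sign for the parenthesis. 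The main subtle point, and essentially the only one, is handling the odd case in part (i) where $X^{p+1}$ can be negative; the $\max(\cdot,0)$ in the definition of $m_{X,s}$ is exactly what saves the argument, since it lets us compare against $0$ rather than the signed quantity $X^{p+1}$ on the event $\{X<0\}$.
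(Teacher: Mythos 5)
Your proposal is correct and follows essentially the same route as the paper: both rest on the Taylor-remainder recursion $T_{q+1}(sb)=T_{q+2}(sb)+(sb)^{q}/q!$ and reduce the claim to the pointwise inequality $bx^{q}\geq\max(x,0)^{q+1}$ (respectively $bx^{p+1}\geq x^{p+2}$ for $x\in[0,b]$); the paper phrases this as a chain of equivalences while you package it as an explicit identity for the difference, with the extra term $\frac{s^{q}}{q!}\bigl(\mathbb{E}\max(X^{q},0)-\mathbb{E}(X^{q})\bigr)$ correctly observed to vanish in both cases. Your verification that $T_{q+2}(sb)\geq 0$ and the case analysis on the sign of $X$ match the paper's argument, so no gap remains.
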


 Note that even for $p=1$ there exists a random variable that achieves equality in (\ref{Ineq: MGF bound}). For example, a Bernoulli random variable that yields $1$ with probability $q$ and $0$ with probability $1-q$ achieves equality in (\ref{Ineq: MGF bound}) for $p=1$. For the  Bernoulli random variable all the moments are equal to $q$ which is the highest value that the higher moments can have given that the first moment equals $q$ and the support is $[0,1]$. Thus, higher moments do not provide any useful information and for every integer $p>1$ inequality (\ref{Ineq: MGF bound}) reduces to the case of $p=1$.

  The upper bounds on the moment-generating function (\ref{Ineq: MGF bound}) are not optimal in the sense that there might be a smaller bound given the information on the random variable's first $p$ moments. The optimal bound can be found by solving a linear program and is typically not given as a closed-form expression (see \cite{pandit2006worst} for a discussion). The main advantage of our upper bounds is the fact that the derivative of the right-hand-side of inequality (\ref{Ineq: MGF bound}) with respect to $s$ is log-convex for non-negative random variables. This key convexity property is the main ingredient in deriving a closed-form Hoeffding type concentration bounds that depend on the random variables' first $p$ moments (see the discussion after Theorem \ref{Thm: concent hof}). 
  For a proof of Lemma \ref{Lemma: convexity} see the proof of Theorem \ref{Thm: concent hof}. 
  \begin{lemma}\label{Lemma: convexity}
  Let $p$ be a positive integer and suppose that $X$ is random variable on $[0,b]$. Then the derivative of $z(s)$ where $z(s)$ is the right-hand-side of inequality (\ref{Ineq: MGF bound}), 
  $$ z(s):= \frac{\mathbb{E} (X^{p}) }{b^{p}} T_{p+1}(sb)  + \mathbb{E} \left ( \sum _{j=0} ^{p-1} \frac{s^{j}X^{j}} {j!} \right )$$
  is log-convex on $(0,\infty)$, i.e., $\log (z^{(1)}(s))$ is a convex function on $(0,\infty)$.  
   \end{lemma}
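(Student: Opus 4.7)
The plan is to compute $z^{(1)}(s)$ explicitly, rewrite it as an expectation over $X$, and then reduce the log-convexity claim to a pointwise statement handled at the power-series level. I would first use the elementary identity $T_{p+1}'(u) = T_p(u)$ to write
\[
z^{(1)}(s) \;=\; \frac{\mathbb{E}(X^p)}{b^{p-1}}\, T_p(sb) \;+\; \mathbb{E}\!\left(X \sum_{k=0}^{p-2} \frac{(sX)^k}{k!}\right) \;=\; \mathbb{E}\bigl(X\,\Psi(s, X)\bigr),
\]
where $\Psi(s,x) := \sum_{k=0}^{p-2}(sx)^k/k! + (x^{p-1}/b^{p-1})T_p(sb)$, after rewriting $\mathbb{E}(X^p)/b^{p-1}$ as $\mathbb{E}(X \cdot X^{p-1}/b^{p-1})$. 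Since non-negative integration preserves log-convexity (by H\"older's inequality), it suffices to show that for every fixed $x \in (0, b]$ the map $s \mapsto \Psi(s,x)$ is log-convex on $(0, \infty)$.

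Next I would set $q := x/b \in (0,1]$ and $u := sb$; a direct rearrangement gives $\Psi(s,x) = h_q(u)$ with
\[
h_q(u) \;=\; \sum_{k=0}^{p-2}\frac{(qu)^k}{k!} + q^{p-1}\,T_p(u) \;=\; \sum_{k\geq 0}\frac{\alpha_k\, u^k}{k!}, \qquad \alpha_k := q^{\min(k,\,p-1)}.
\]
The crucial observation is that the coefficient sequence $(\alpha_k)_{k\geq 0}$ is log-convex, because $\log \alpha_k = \min(k, p-1)\log q$ is the product of the concave-in-$k$ function $\min(k,p-1)$ and the non-positive constant $\log q$, hence convex in $k$.

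The final step is to invoke the following general fact: for any positive log-convex sequence $(\alpha_k)_{k\geq 0}$, the function $f(u) := \sum_k \alpha_k u^k/k!$ is log-convex on $(0, \infty)$. To prove it I would expand
\[
f(u)\,f''(u) - f'(u)^2 \;=\; \sum_n \frac{u^n}{n!}\, S_n, \qquad 2 S_n \;=\; \sum_{i+j=n}\binom{n}{i}\bigl[\alpha_i \alpha_{j+2} + \alpha_j \alpha_{i+2} - 2\alpha_{i+1}\alpha_{j+1}\bigr],
\]
the symmetric form following by swapping $i \leftrightarrow j$ in the defining expression for $S_n$. Writing $\alpha_k = e^{\phi(k)}$, summing the two midpoint-convexity inequalities $\phi(i+1) \leq \tfrac12(\phi(i)+\phi(i+2))$ and $\phi(j+1) \leq \tfrac12(\phi(j)+\phi(j+2))$, and then applying AM--GM in the form $e^{(A+B)/2} \leq (e^A + e^B)/2$ will yield the termwise bound $2\alpha_{i+1}\alpha_{j+1} \leq \alpha_i\alpha_{j+2} + \alpha_j\alpha_{i+2}$. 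Consequently each $S_n \geq 0$, every Taylor coefficient of $ff''-(f')^2$ is non-negative, and hence $h_q$ is log-convex on $(0, \infty)$. The main obstacle will be precisely this last step---the passage from log-convexity of the coefficient sequence to log-convexity of the associated generating function---since individual terms $\alpha_i \alpha_{j+2} - \alpha_{i+1}\alpha_{j+1}$ can be of either sign, so the symmetrization $i \leftrightarrow j$ combined with the AM--GM/double-convexity trick is essential.
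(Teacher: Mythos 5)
Your proof is correct, but it takes a genuinely different route from the paper's. The paper establishes this lemma as Step~2 of the proof of Theorem~\ref{Thm: concent hof}: it normalizes $z^{(1)}$ to $w(y)=\exp(y)+\sum_{j=0}^{p-2}\frac{y^{j}}{j!}\beta_{j+1}$ with $\beta_{j}=b^{p-j}\mu^{j}/\mu^{p}-1\geq 0$, and runs an induction over the partial sums $w_{k}$ (with $w_{k}^{(1)}=w_{k-1}$), combining the L'Hospital rule for monotonicity with the Cauchy--Schwarz moment inequality $\mu^{d}\mu^{d+2}\geq(\mu^{d+1})^{2}$ to propagate log-convexity up the chain. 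You instead keep the expectation outside: writing $z^{(1)}(s)=\mathbb{E}\bigl(X\,\Psi(s,X)\bigr)$, using closure of log-convexity under integration against the non-negative measure $X\,d\mathbb{P}$ (H\"older), and reducing to the purely deterministic statement that $h_{q}(u)=\sum_{k}q^{\min(k,p-1)}u^{k}/k!$ is log-convex --- which you obtain from the general fact that an exponential generating function with a log-convex coefficient sequence is log-convex, proved by the symmetrization-plus-AM--GM bound $2\alpha_{i+1}\alpha_{j+1}\leq\alpha_{i}\alpha_{j+2}+\alpha_{j}\alpha_{i+2}$. I checked the details: the identity $z^{(1)}(s)=\mathbb{E}(X\Psi(s,X))$, the coefficient identification $\alpha_{k}=q^{\min(k,p-1)}$, the convexity of $k\mapsto\min(k,p-1)\log q$, and the non-negativity of each $S_{n}$ are all right. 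What each approach buys: yours isolates a clean, reusable lemma and entirely avoids the moment Cauchy--Schwarz step (the pointwise coefficients $q^{\min(k,p-1)}$ are trivially log-convex, and the moment inequality is subsumed by the integration step); the paper's stays at the level of the aggregated moments and reuses machinery (the monotone L'Hospital rule) already set up for Theorem~\ref{Coro: exp upper bound}. The only caveat, shared with the paper, is the implicit assumption $\mathbb{P}(X>0)>0$ so that $z^{(1)}>0$ and the logarithm is defined.
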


\subsection{Concentration inequalities: Hoeffding type inequalities} \label{Sec: concert Hoeffiding}

In this section we derive Hoeffding type concentration inequalities that provide exponential bounds on the probability that the sum of independent bounded random variables differs from its expected value. We improve Hoeffding's inequality  by using information on the random variables' first $p$ moments. We derive a tighter bound  than the standard Hoeffding's bound for every integer $p \geq 2$ (see Theorem \ref{Thm: concent hof} part (ii)). Importantly, for every positive integer $p$, the bound is given as a simple closed-form expression that depends on the random variables' first $p$ moments.

\begin{theorem} \label{Thm: concent hof}
Let $X_{1},\ldots,X_{n}$ be independent random variables where $X_{i}$ is a random variable on $[0,b_{i}]$, $b_{i} >0$, $\mathbb{P}(X_{i} > 0) > 0$. Let $S_{n} = \sum _{i=1} ^{n} X_{i}$.  Let $p \geq 1$ be an integer. 
Denote $\mathbb{E}(X^{k}_{i}) = \mu ^{k}_{i}$ and let  $D_{n} = \sum _{j=1} ^{n} d(X_{j}) $ where $d(X_{i}) = \left (\mu^{2}_{i} / \mu ^{1}_{i}\right )^{2}$. 

(i) For all $t>0$ we have
\begin{equation} \label{Ineq: Hoef bound}
     \mathbb{P}(S_{n}-\mathbb{E}(S_{n}) \geq t) \leq \exp \left (- \frac{2t^{2}}{\sum_{i=1}^{n} b^{2} _{i} C_{p} \left (4tb_{i}/D_{n},b_{i}, \mu  ^{1}_{i}, \ldots , \mu ^{p}_{i} \right ) } \right )
\end{equation}
where 
\begin{equation}
    C_{p} \left (y  ,b_{i} , \mu^{1} _{i} , \ldots , \mu^{p}_{i} \right ) = \left ( \frac{\mu^{p}_{i}\exp(y) + \sum _{j=0} ^{p-3} \frac{y^{j}} {j!} \left (  b^{p-j-2}_{i}\mu^{j+2}_{i} - \mu^{p}_{i} \right )}{\mu^{p}_{i}\exp(y) + \sum _{j=0} ^{p-2} \frac{y^{j}} {j!} \left (  b^{p-j-1}_{i}\mu^{j+1}_{i} - \mu^{p}_{i} \right )} \right )^{2}
\end{equation}
for $i=1,\ldots,n$ and all $y > 0$. 

(ii) For every integer $p \geq 1$ we have $0 < C_{p} \leq 1$. Thus, inequality (\ref{Ineq: Hoef bound}) is tighter than Hoeffding's inequality: 
\begin{equation} \label{Ineq: Hoef bound p=1}
     \mathbb{P}(S_{n}-\mathbb{E}(S_{n}) \geq t) \leq \exp \left (-\frac{2t^{2}}{\sum _{i=1}^{n} b^{2} _{i} } \right )
\end{equation}
which corresponds to $p=1$ and $C_{1} = 1$.

\end{theorem}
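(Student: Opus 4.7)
The plan is to combine Markov's inequality with the moment-generating function bound of Theorem \ref{Coro: exp upper bound} and a Hoeffding-type quadratic upper bound on the centred log-MGF. Writing $z_i(s)$ for the right-hand side of inequality (\ref{Ineq: MGF bound}) applied to $X_i$ (with $\max(X_i^p,0)=X_i^p$ since $X_i\geq 0$), and setting $\phi_i(s):=\log z_i(s)-s\mu^1_i$ so that $\phi_i(0)=\phi_i'(0)=0$, the Chernoff bound combined with Theorem \ref{Coro: exp upper bound} gives, for any $s\geq 0$,
\begin{equation*}
\mathbb{P}(S_n-\mathbb{E}S_n\geq t)\leq \exp\Bigl(-st+\sum_{i=1}^n\phi_i(s)\Bigr).
\end{equation*}
A direct matching of coefficients using the explicit formulas for $z_i'$ and $z_i''$ shows that $b_i^2\,C_p(sb_i,b_i,\mu^1_i,\ldots,\mu^p_i)=(z_i''(s)/z_i'(s))^2$; this identity is the bridge between the MGF bound and the function $C_p$ appearing in the statement.

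The main technical step is a Hoeffding-type inequality valid on an interval $[0,s_0]$: for every $s\in[0,s_0]$,
\begin{equation*}
\phi_i(s)\leq \frac{s^2 b_i^2\,C_p(s_0 b_i,b_i,\mu^1_i,\ldots,\mu^p_i)}{8}.
\end{equation*}
The pointwise ingredient is $\phi_i''(s)\leq b_i^2 C_p(sb_i)/4$ for every $s\geq 0$, which is equivalent to $4(z_iz_i''-(z_i')^2)(z_i')^2\leq z_i^2(z_i'')^2$, and this in turn is the expanded form of the trivial identity $(z_iz_i''-2(z_i')^2)^2\geq 0$. By Lemma \ref{Lemma: convexity} (log-convexity of $z_i'$, equivalently $z_i'''z_i'\geq(z_i'')^2$) the ratio $z_i''/z_i'$ is non-decreasing in $s$, so $C_p(sb_i)\leq C_p(s_0 b_i)$ on $[0,s_0]$; integrating the pointwise bound twice using $\phi_i(0)=\phi_i'(0)=0$ yields the quadratic bound. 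Summing over $i$ and optimizing the Chernoff exponent $-st+s^2V(s_0)/8$ on $[0,s_0]$ (where $V(s_0):=\sum_i b_i^2 C_p(s_0 b_i,\ldots)$), the unconstrained minimizer $s^\star=4t/V(s_0)$ lies in the feasible region whenever $V(s_0)\geq 4t/s_0$, in which case the minimum value is $-2t^2/V(s_0)$.

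To complete part (i) I would pick $s_0=4t/D_n$, so that $s_0 b_i=4tb_i/D_n$ is precisely the argument of $C_p$ in the statement; the feasibility $V(s_0)\geq D_n$ then follows term-by-term from $z_i''(s)/z_i'(s)\geq z_i''(0)/z_i'(0)=\mu^2_i/\mu^1_i$, giving $b_i^2 C_p(s_0 b_i)\geq (\mu^2_i/\mu^1_i)^2=d(X_i)$. For part (ii), the claim $C_p\leq 1$ is equivalent to $z_i''(s)\leq b_i z_i'(s)$; a direct computation with the moment formulas yields $b_i z_i'(s)-z_i''(s)=\sum_{k=0}^{p-2}\frac{s^k}{k!}(b_i\mu^{k+1}_i-\mu^{k+2}_i)$, and each coefficient equals $\mathbb{E}[X_i^{k+1}(b_i-X_i)]\geq 0$; positivity $C_p>0$ follows because $\mathbb{P}(X_i>0)>0$ forces $\mu^p_i>0$, and hence $z_i''>0$ on $[0,\infty)$. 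The principal obstacle is the log-convexity claim of Lemma \ref{Lemma: convexity}, whose proof must be carried out inside the present theorem; I would approach it by representing $z_i'(s)=\mathbb{E}[\psi_s(X_i)]$ with $\psi_s(x):=\sum_{k=0}^{p-2}s^k x^{k+1}/k!+(x^p/b_i^{p-1})T_p(sb_i)$, verifying pointwise log-convexity of $s\mapsto\psi_s(x)$ for each $x\in[0,b_i]$ (which for $p\geq 3$ requires a careful computation exploiting $x\leq b_i$), and invoking the fact that integration against a non-negative measure preserves log-convexity.
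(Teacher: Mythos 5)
Your proposal is correct, and for everything except the log-convexity lemma it is essentially the paper's own argument: the identification $b_i^2\,C_p(sb_i,\ldots)=(z_i''(s)/z_i'(s))^2$, the pointwise bound $\phi_i''(s)\leq \tfrac14 (z_i''/z_i')^2$ (your observation that it expands to $(z_iz_i''-2(z_i')^2)^2\geq 0$ is the paper's elementary inequality $x(1-x/z)\leq z/4$ in disguise), the double integration from $\phi_i(0)=\phi_i'(0)=0$, the choice $s_0=4t/D_n$ with feasibility supplied by $z_i''(s)/z_i'(s)\geq z_i''(0)/z_i'(0)=\mu_i^2/\mu_i^1$, and the coefficientwise verification $b_i\mu_i^{k+1}\geq \mu_i^{k+2}$ for part (ii) all correspond to the paper's Steps 3--6. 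The one genuine divergence is your route to Lemma \ref{Lemma: convexity}. The paper proves log-convexity of $v^{(1)}$ directly at the level of moments, by induction on the truncations $w_k(y)=\exp(y)+\sum_{j=0}^{k}\frac{y^j}{j!}\beta_{p-1+j-k}$, whose key input is the chain inequality $(1+\beta_{j-1})(1+\beta_{j+1})\geq(1+\beta_j)^2$, obtained from Cauchy--Schwarz on the moments ($\mu^{d}\mu^{d+2}\geq(\mu^{d+1})^2$). You instead propose pointwise log-convexity of $s\mapsto\psi_s(x)$ for each fixed $x\in(0,b_i]$ followed by closure of log-convexity under mixtures. That route does close: after normalizing, $\psi_s(x)$ is a positive multiple of $\exp(u)+\sum_{k=0}^{p-2}\frac{u^k}{k!}\bigl(\alpha^{k-p+1}-1\bigr)$ with $u=sb_i$ and $\alpha=x/b_i\in(0,1]$, which has exactly the structure the paper's induction handles, and the required chain inequalities hold \emph{with equality} because $1+\gamma_k=\alpha^{k-p+1}$ is a geometric sequence in $k$. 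So your version trades the paper's Cauchy--Schwarz on moments for the H\"older inequality hidden in ``mixtures of log-convex functions are log-convex''; it is arguably more transparent, since the only nontrivial verification concerns an explicit one-parameter family of functions, but as written it is still a sketch --- to complete it you would have to carry out that induction (or an equivalent direct computation of $\psi_s''\psi_s-(\psi_s')^2\geq 0$) on the explicit family, which is precisely the work the paper does in its Step 2.
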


\begin{remark} \label{Remark: Hoeff}
(i) Theorem \ref{Thm: concent hof} can be easily applied to bounded random variables that are not necessarily positive. If  $Y_{i}$ is a random variable on $[a_{i},b_{i}]$ and $Y_{1},\ldots,Y_{n}$ are independent, we can define the random variables  $X_{i}=Y_{i}-a_{i}$ on $[0,b_{i}-a_{i}]$ and use Theorem \ref{Thm: concent hof} to conclude that 
\begin{align*} 
\begin{split}
     \mathbb{P} \left ( \sum _{i=1}^{n} Y_{i}-\mathbb{E} \left (\sum _{i=1}^{n} Y_{i} \right ) \geq t \right) & =  \mathbb{P}(S_{n}-\mathbb{E}(S_{n}) \geq t) \\
     & \leq \exp \left (-\frac{2t^{2}}{\sum _{i=1}^{n} (b_{i} - a_{i})^{2}  C_{p} \left (4t(b_{i}-a_{i})/D_{n} ,b_{i}-a_{i} , \mu^{1} _{i} , \ldots , \mu^{p}_{i} \right )} \right ).
     \end{split}
     \end{align*}
     Note that $\mu^{k}_{i} = \mathbb{E}(Y_{i}-a_{i})^{k}$ for $i=1,\ldots,n$ and  $k=1,\ldots,p$. 
     
    Applying the last inequality to $-Y_{i}$ and using the union bound yield
    \begin{align} \label{Ineq: Two-sided hoef}
     \mathbb{P} \left ( \left | \sum _{i=1}^{n} Y_{i}-\mathbb{E} \left (\sum _{i=1}^{n} Y_{i} \right  )  \right | \geq t \right) 
     \leq 2 \exp \left (-\frac{2t^{2}}{\sum _{i=1}^{n} (b_{i} - a_{i})^{2}  \overline{C}_{p} (t,Y_{i}, b_{i} - a_{i} ) } \right )
     \end{align}
   where 
   \begin{equation} \label{Equation: Cp overline}
    \overline{C}_{p} (t,Y_{i},d_{i} ) = \max  \left \{ C_{p} \left (4td_{i}/\sum_{i=1}^{n} (\lambda_{i}^{2} / \lambda_{i})^{2}, d_{i} , \lambda^{1} _{i} , \ldots , \lambda^{p}_{i} \right ) ,  C_{p} \left (4td_{i}/\sum_{i=1}^{n} (\mu_{i}^{2} / \mu_{i})^{2} ,d_{i}, \mu^{1} _{i} , \ldots , \mu^{p}_{i} \right )  \right \}
   \end{equation}
   where $\mu^{k}_{i} = \mathbb{E}(Y_{i}-a_{i})^{k}$ and $\lambda^{k}_{i} = \mathbb{E}(b_{i} - Y_{i})^{k}$
   
     (ii) If $X_{1},\ldots,X_{n}$ are identically distributed then inequality (\ref{Ineq: Hoef bound}) yields 
\begin{equation} \label{Ineq: Hoef IID}
     \mathbb{P}(S_{n}-\mathbb{E}(S_{n}) \geq nt) \leq \exp \left (-\frac{2nt^{2}}{ b^{2} _{i} C_{p} \left (4tb_{i}/d(X_{i}) ,b_{i} , \mu^{1} _{i} , \ldots , \mu^{p}_{i} \right )} \right ).
\end{equation}

(iii) In some cases of interest only a bound on the random variables' higher moments is a available. Theorem \ref{Thm: concent hof} part (i) holds also under the condition $\mathbb{E}(X^{k}_{i}) \leq \mu ^{k}_{i}$ for $k=2,\ldots,p$, $i=1,\ldots,n$ as long as $(\mu^{d+1}_{i})^{2} \leq \mu^{d}_{i} \mu^{d+2}_{i}$ for $i= 1,\ldots,n$ and $d=1,\ldots,p-2$. 

(iv) Our results can be extended in a standard way for martingales and other stochastic processes such as Markov chain (see \cite{freedman1975tail}). For the sake of brevity we omit the details. 
\end{remark}

\textbf{The approach.} We now discuss the sketch of the proof of Theorem \ref{Thm: concent hof} part (i). The full proof is in Section \ref{Sec: Proofs}. Fix a positive integer $p$. 
We start with a random variable $X$ on $[0,b]$. Assume for simplicity that $b=1$. 
Suppose that we have $\mathbb{E}\exp(yX) \leq v(y)$ for all  $y \geq 0$ where $v(y)$ is some bound on the moment generating function of $X$. Let $g(y)=\ln(v(y))$. Then using Taylor's theorem we can proceed as in the proof of Hoeffding's inequality \citep{hoeffding1994probability} to show that  
$$\mathbb{E}\exp(sX-s\mathbb{E}(X)) \leq \exp(0.5s^{2}\max _{0 \leq y \leq s} g^{(2)}(y) ).$$ 
Let $C(s)=\max _{0 \leq y \leq s}  (v^{(2)}(y)/v^{(1)}(y))^{2}$. 
We have
\begin{align*}
    \max _{0 \leq z \leq s} g^{(2)}(y) = \max _{0 \leq y \leq s} \frac{v^{(2)}(y)}{v(y)}\left (1 - \frac{v^{(2)}(y)(v^{(1)}(y))^{2}}{v(y)(v^{(2)}(y))^{2}} \right ) \leq \max _{0 \leq y \leq s} \frac{v^{(2)}(y)}{v(y)}\left (1 - \frac{v^{(2)}(y)}{v(y)C(s)} \right ) \leq  0.25C(s)
\end{align*}
where the second inequality follows from the elementary inequality $x(1-x/z) \leq 0.25z$ for all $z>0$ and $x>0$.
Hence, an essential step in deriving a closed-form exponential bound on the moment generating function is to find a function $v$ that induces a simple closed-form expression for the function $C$. This is exactly where Theorem \ref{Coro: exp upper bound} is useful. Suppose that $v(y)$ is  the right-hand side of inequality (\ref{Ineq: MGF bound}) (for some $p$). Then $v(y)$ provides a bound on  the moment generating function that depends on the random variable's first $p$ moments. 
A key step in the proof of Theorem \ref{Thm: concent hof} is to use Lemma \ref{Lemma: convexity}, i.e., to use the fact that $v^{(1)}$ is a log-convex function. This implies that $v^{(2)}/v^{(1)}$ is increasing so  $C(s)=(v^{(2)}(s)/v^{(1)}(s))^{2}$ is given in a closed-form expression. This key step shows the usefulness of Theorem \ref{Coro: exp upper bound}  for deriving closed-form concentration bounds using higher moments information.  
With this bound we can conclude that $\mathbb{E}\exp(sX-s\mathbb{E}(X)) \leq \exp(s^{2}C(s)/8)$. Applying the Chernoff bound and choosing a specific value for $s$ proves Theorem \ref{Thm: concent hof} part (i).

 \textbf{A simple special case.} The calculation of $C_{p}$ in inequality (\ref{Ineq: Hoef bound}) is immediate. For example,  
for $p=2$ we have 
$$ C_{2}(x,b_{i},\mu^{1}_{i},\mu^{2}_{i}) = \left ( \frac{\mu^{2} _{i}\exp(x)}{\mu^{2} _{i}\exp(x) + b_{i}\mu^{1} _{i} - \mu^{2} _{i}} \right ) ^{2} $$
and for $p=3$ we have 
\begin{align*}
     C_{3}(x,b_{i},\mu^{1}_{i},\mu^{2}_{i},\mu^{3}_{i}) =  \left ( \frac{\mu^{3} _{i}\exp(x) + b_{i} \mu^{2} _{i}- \mu^{3} _{i} }{\mu^{3} _{i}\exp(x) +  b^{2} _{i} \mu^{1} _{i} - \mu^{3} _{i} + (b_{i} \mu^{2} _{i} -  \mu^{3} _{i} )x} \right ) ^{2}  
     \end{align*}
     for all $i=1,\ldots,n$ and all $x \geq 0$. 

The dependence of $C_{p}$ on the first argument in Theorem \ref{Thm: concent hof}  can be simplified. 
For a random variable $X$ on $[0,1]$, a positive integer $p$ and $c>0$ let 
$$ I_{p}(X,c) =: \frac{ T_{p}(c) \mathbb{E}(X^{p}) + \sum _{j=0}^{p-2}\mathbb{E}(X^{j+1}) c^{j}/j! }{T_{p-1}(c) \mathbb{E}(X^{p}) + \sum _{j=0}^{p-3}\mathbb{E}(X^{j+2}) c^{j}/j!} $$
where $(x)_{+}=\max(x,0)$. We have $I_{p} \geq 1$ for every positive integer $p$. The function $I_{p}$ can be interpreted as a measure for the usefulness of knowing the random variable's first $p$ moment given that we know the  first $p-1$ moments. For example, $I_{1}(X,c)=1$ for every random variable $X$ and 
$$I_{2}(X,1) =\frac{ \exp(1)\mathbb{E}(X^{2}) + \mathbb{E}(X) -\mathbb{E}(X^{2})  } {\exp(1) \mathbb{E}(X^{2} )} = \frac{\exp(1)-1}{\exp(1)}+\frac{1}{\exp(1)}\frac{\mathbb{E}(X)}{\mathbb{E}(X^{2})} $$ equals $1$ if knowing  $\mathbb{E}(X^{2})$ is not useful at all given the knowledge of $\mathbb{E}(X)$ (because $\mathbb{E}(X^{2})$ is bounded above by $\mathbb{E}(X)$, then the  highest second moment possible is  $\mathbb{E}(X)$ which yields $I_{2} =1$) and is greater than $1$ when the second moment provides useful information, i.e., the first two moments differ. Similarly, $$I_{3} (X,1)= \frac{ \exp(1) \mathbb{E}(X^{3}) +\mathbb{E}(X^{2}) + \mathbb{E}(X) -2\mathbb{E}(X^{3}) } {\exp(1) \mathbb{E}(X^{3}) + \mathbb{E}(X^{2}) - \mathbb{E}(X^{3}) }$$ 
equals $I_{2}(X,1)$ when the information about the third moment is not useful (i.e., $\mathbb{E}(X^{3}) = \mathbb{E}(X^{2})$). 

We now provide a concentration bounds that depend on $I_{p}$ and simplify the concentration bounds in Theorem \ref{Thm: concent hof} when $t$ is relatively small by using the fact that $C_{p}$ is increasing in the first argument. 

\begin{corollary} \label{Coro: Special case}
Let $X_{1},\ldots,X_{n}$ be independent random variables  where $X_{i}$ is a random variable on $[0,1]$.  Let $S_{n} = \sum _{i=1} ^{n} X_{i}$. 
Assume that $\mathbb{E}(X^{k}_{i}) = \mathbb{E}(X^{k}_{j}) = \mathbb{E} (X^{k}) >0$ for $k=1,\ldots,p$ and $i,j = 1,\ldots, n$. Let $c>0$ and suppose that $$ t \leq c \left ( \frac{\mathbb{E}(X^{2} )}{2 \mathbb{E}(X)  } \right )^{2}.  $$
Then for every positive integer $p$ we have
\begin{equation} \label{Ineq: HoefBound Cp}
     \mathbb{P}(S_{n}-\mathbb{E}(S_{n}) \geq nt) \leq \exp \left (-2nt^{2} (I_{p}(X,c) ) ^{2} \right ).
\end{equation}
\end{corollary}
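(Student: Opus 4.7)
The plan is to specialize Theorem \ref{Thm: concent hof} to the i.i.d.\ setting and then translate the resulting $C_p$ factor into $I_p$ via the hypothesis on $t$. First, apply Theorem \ref{Thm: concent hof} with $t$ replaced by $nt$. Under the assumption that $X_1,\ldots,X_n$ are i.i.d.\ on $[0,1]$ with common moments $\mu^k := \mathbb{E}(X^k)$, we have $b_i = 1$ and $d(X_i) = (\mu^2/\mu^1)^2$, so $D_n = n (\mu^2/\mu^1)^2$. The first argument of $C_p$ then collapses to
$$y \;:=\; \frac{4(nt)\cdot 1}{n(\mu^2/\mu^1)^2} \;=\; 4t\left(\frac{\mu^1}{\mu^2}\right)^2,$$
and the sum $\sum_{i=1}^n b_i^2 C_p(\cdot)$ becomes $n\,C_p(y,1,\mu^1,\ldots,\mu^p)$. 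This gives the preliminary bound
$$\mathbb{P}(S_n - \mathbb{E}(S_n) \geq nt) \;\leq\; \exp\!\left(-\frac{2nt^2}{C_p(y,1,\mu^1,\ldots,\mu^p)}\right).$$

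Next, invoke the hypothesis $t \leq c(\mu^2/(2\mu^1))^2 = (c/4)(\mu^2/\mu^1)^2$. Substituting into the expression for $y$ yields $y \leq c$. Since $C_p$ is increasing in its first argument (as noted in the discussion immediately preceding the corollary, a consequence of Lemma \ref{Lemma: convexity} applied to the sketch of proof of Theorem \ref{Thm: concent hof}), we obtain $C_p(y,1,\mu^1,\ldots,\mu^p) \leq C_p(c,1,\mu^1,\ldots,\mu^p)$.

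It remains to verify the algebraic identity
$$C_p(c,1,\mu^1,\ldots,\mu^p) \;=\; \frac{1}{I_p(X,c)^2}.$$
This is a direct expansion: using $T_{p-1}(c)=\exp(c)-\sum_{j=0}^{p-3} c^j/j!$, the denominator of $I_p(X,c)$ becomes
$$T_{p-1}(c)\mu^p + \sum_{j=0}^{p-3}\frac{c^j}{j!}\mu^{j+2} \;=\; \mu^p\exp(c) + \sum_{j=0}^{p-3}\frac{c^j}{j!}(\mu^{j+2}-\mu^p),$$
which is exactly the numerator of $\sqrt{C_p(c,1,\mu^1,\ldots,\mu^p)}$; a parallel calculation with $T_p(c)$ identifies the numerator of $I_p(X,c)$ with the denominator of $\sqrt{C_p(c,1,\mu^1,\ldots,\mu^p)}$. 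Combining these three steps gives the claimed inequality
$$\mathbb{P}(S_n - \mathbb{E}(S_n) \geq nt) \;\leq\; \exp\!\left(-2nt^2\,I_p(X,c)^2\right).$$

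The main obstacle is bookkeeping in the third step: both expressions involve Taylor remainders of the exponential against sums of moment-weighted monomials, so one has to carefully match indices to see that the algebraic forms coincide. Once the Taylor remainders are expanded, the identification is mechanical. The monotonicity of $C_p$ in $y$, although asserted rather than derived here, is the conceptual engine that converts the $t$-dependent $C_p$ into the $t$-free $I_p$ over the prescribed range of $t$.
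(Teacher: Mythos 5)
Your proposal is correct and follows essentially the same route as the paper: the paper invokes Remark \ref{Remark: Hoeff} part (ii) for the i.i.d.\ specialization (which you re-derive directly from Theorem \ref{Thm: concent hof}), uses the monotonicity of $C_{p}$ in its first argument together with $4t/d(X_{i})\leq c$, and identifies $(I_{p}(X,c))^{2}=1/C_{p}(c,1,\mu^{1},\ldots,\mu^{p})$. Your explicit expansion of the Taylor remainders to verify that identity is a detail the paper leaves implicit, but the argument is the same.
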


\textbf{The missing factor in Hoeffding's inequality.} Consider the function $$ \theta(x) : = \frac{1}{(2\pi)^{0.5} } \exp(x^{2}/2) \int _{x}^{\infty} \exp(-u^{2}/2) du$$
Then the central limit theorem and the fact that (see \cite{talagrand1995missing})
$$ \frac{1}{(2\pi)^{0.5} (1+x) } \leq  \theta(x)  \leq \frac{1}{(2\pi)^{0.5} x} $$
for all $x >0$ imply that there is a missing factor in Hoeffding's inequality \citep{talagrand1995missing}. We use the results in \cite{talagrand1995missing} together with Theorem \ref{Thm: concent hof} to derive concentration bounds of optimal order that depend on the random variables' first $p$ moments. 

\begin{corollary} \label{Coro: Talag}
Let $X_{1},\ldots,X_{n}$ be independent random variables where $\mathbb{E}(X_{i})=0$ and $\vert X_{i} \vert \leq b$ for $i=1,\ldots,n$. Let $S_{n} = \sum _{i=1} ^{n} X_{i}$ and let $p \geq 1$ be an integer. 

There exists a large universal constant $K$ such that for $0 < t \leq \sigma ^{2}/Kb$  we have
\begin{align} \label{Ineq: missing factor}
     \mathbb{P} \left ( S_{n} \geq t \right)  \leq \exp \left (-\frac{t^{2}}{2 \sum _{i=1}^{n} b_{i}^{2}  C_{p} \left (8tb_{i}/D_{n} ,2b_{i} , \mu^{1} _{i} , \ldots , \mu^{p}_{i} \right )} \right ) \left (\theta \left ( \frac{t}{\sigma} \right ) + \frac{Kb}{\sigma}\right ) 
     \end{align}
     where\footnote{ Note that $$\theta \left ( \frac{t}{\sigma} \right ) + \frac{Kb}{\sigma} \leq \frac{K'\sigma}{t}  $$ for some constant $K'$ so inequality (\ref{Ineq: missing factor}) provides a concentration bound of optimal order.}  $\mu^{k}_{i} = \mathbb{E}(X_{i}+ b_{i})^{k}$ for $i=1,\ldots,n$,   $k=1,\ldots,p$, $\sigma^{2} =:\mathbb{E}( \sum _{i=1}^{n} X_{i})^{2}$. 
\end{corollary}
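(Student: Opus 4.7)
The plan is to combine the sub-Gaussian moment-generating-function bound implicit in the proof of Theorem \ref{Thm: concent hof} with Talagrand's missing-factor theorem from \cite{talagrand1995missing}. Since $\mathbb{E}(X_i) = 0$ and $\vert X_i \vert \leq b_i$, I first shift: set $Y_i = X_i + b_i \in [0, 2b_i]$, so the moments $\mu^k_i = \mathbb{E}(Y_i^k)$ are non-negative and Theorem \ref{Coro: exp upper bound} applies to $Y_i$ on $[0, 2b_i]$. The tail event $\{S_n \geq t\}$ coincides with $\{\sum Y_i - \sum \mathbb{E}Y_i \geq t\}$, so from now on I work with the centered sum of the $Y_i$.

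Next, I would extract from the discussion preceding Theorem \ref{Thm: concent hof} a sub-Gaussian-type MGF bound. Let $v_i(s)$ denote the right-hand side of inequality (\ref{Ineq: MGF bound}) applied to $Y_i$ on $[0, 2b_i]$, and set $g_i(s) = \log v_i(s)$. Taylor's theorem at $0$ gives
\begin{equation*}
\mathbb{E}\exp\bigl(s(Y_i - \mathbb{E}Y_i)\bigr) \leq \exp\bigl(\tfrac{s^2}{2}\max_{0 \leq y \leq s} g_i^{(2)}(y)\bigr),
\end{equation*}
and the calculation reproduced in "The approach" paragraph, together with the log-convexity of $v_i^{(1)}$ from Lemma \ref{Lemma: convexity}, bounds the right side by $\exp\bigl(\tfrac{s^2}{8}(2b_i)^2 C_p(2sb_i, 2b_i, \mu^1_i, \ldots, \mu^p_i)\bigr)$. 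Taking the product over $i$ and using monotonicity of $C_p$ in its first argument to replace $C_p(2sb_i, \cdot)$ by its value at the Chernoff-optimal endpoint $s = 4t/D_n$, I obtain a single sub-Gaussian MGF bound $\mathbb{E}\exp(s(S_n - \mathbb{E}S_n)) \leq \exp(s^2 V/2)$ with variance proxy $V = \sum_{i=1}^n b_i^2 C_p(8tb_i/D_n, 2b_i, \ldots)$, valid uniformly on the relevant $s$-range. Note that at this optimal $s$ one recovers the Hoeffding-type exponent $-t^2/(2V)$ that appears in (\ref{Ineq: missing factor}).

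Finally, I would feed this sub-Gaussian MGF bound into Talagrand's missing-factor theorem in place of the Chernoff bound. Talagrand's result in \cite{talagrand1995missing} shows that whenever $\mathbb{E}\exp(sS_n) \leq \exp(s^2 V/2)$ holds on $0 \leq s \leq c/b$ and $\sigma^2 = \mathbb{E}(S_n^2)$ is known, then for $0 < t \leq V/(Kb)$,
\begin{equation*}
\mathbb{P}(S_n \geq t) \leq \exp\bigl(-t^2/(2V)\bigr) \bigl(\theta(t/\sigma) + Kb/\sigma\bigr),
\end{equation*}
which with our choice of $V$ is precisely inequality (\ref{Ineq: missing factor}). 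The main obstacle is to confirm that the sub-Gaussian MGF bound indeed holds with a single variance proxy on the full $s$-range required by Talagrand's theorem; this relies essentially on the facts established in Theorem \ref{Thm: concent hof}(ii) that $C_p \leq 1$ and $C_p$ is monotone in its first argument, which allow $C_p(2sb_i, \cdot)$ to be upper bounded by $C_p(8tb_i/D_n, \cdot)$ uniformly for $s \leq 4t/D_n$. The hypothesis $t \leq \sigma^2/(Kb)$ is exactly the range in which both the Taylor-based sub-Gaussian bound and Talagrand's missing-factor theorem are simultaneously valid and in which the extra $\theta(t/\sigma) + Kb/\sigma$ factor gives a meaningful $O(\sigma/t)$ improvement over the pure sub-Gaussian tail.
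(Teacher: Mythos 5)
Your proposal is correct and follows essentially the same route as the paper: the paper's proof simply invokes Theorem 3.3 of \cite{talagrand1995missing} (which multiplies the Chernoff infimum by the factor $\theta(t/\sigma)+Kb/\sigma$) and then bounds that Chernoff infimum via Steps 5 and 6 of the proof of Theorem \ref{Thm: concent hof} applied to the shifted variables $Y_i=X_i+b_i$ on $[0,2b_i]$, which is exactly the sub-Gaussian MGF estimate and optimal choice of $s$ you spell out. Your only deviation is cosmetic --- you restate Talagrand's theorem with a sub-Gaussian hypothesis rather than with the Chernoff infimum itself --- but since you upper-bound that infimum by $\exp(-t^{2}/(2V))$ with the same $V$, the argument is the same.
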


\textbf{A limiting case.} Under the conditions of Theorem \ref{Thm: concent hof}, when $p$ tends to infinity then we need  knowledge on all the moments.  In other words, we need knowledge on the moment generating function of the random variables under consideration. The following Corollary provides an exponential bound for the case that $p$ tends to infinity. 
\begin{corollary} \label{Coro: p tends to inifinity}
Under the conditions and notations of Theorem \ref{Thm: concent hof} we have
    \begin{align*}
      \lim _{p \rightarrow \infty} C_{p}(x,b_{i},\mu^{1}_{i},\ldots,\mu^{p}_{i})= \frac{1}{b^{2}_{i}}\left (\frac{\mathbb{E}(X^{2}_{i})\exp(xX_{i}/b_{i})}{\mathbb{E}(X_{i})\exp(xX_{i}/b_{i})} \right ) ^{2}
     \end{align*}
     for all $i=1,\ldots,n$ and all $x \geq 0$. 
      Theorem \ref{Thm: concent hof} implies that for all $t>0$ we have
\begin{equation} \label{Ineq: Hoef p=infty}
     \mathbb{P}(S_{n}-\mathbb{E}(S_{n}) \geq t) \leq \exp \left (-\frac{2t^{2}}{\sum _{i=1}^{n}\left (\frac{\mathbb{E}(X^{2}_{i})\exp(4tX_{i}/D_{n})}{\mathbb{E}(X_{i})\exp(4tX_{i}/D_{n})} \right ) ^{2} } \right ).
\end{equation}
Note that inequality (\ref{Ineq: Hoef p=infty}) does not depend on $b_{i}$. 
\end{corollary}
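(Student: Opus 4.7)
The plan is to show that, after factoring $b_i^p$ out of the numerator and denominator inside the square in the definition of $C_p$, both sides become partial Taylor expansions of expectations involving $\exp(yX_i/b_i)$, and then pass to the limit by dominated convergence. With $y$ denoting the first argument, rewrite $\mu_i^k/b_i^k = \mathbb{E}((X_i/b_i)^k)$ and collect the $\mu_i^p$ contributions using the identity $\exp(y) - \sum_{j=0}^{p-3} y^j/j! = T_{p-1}(y)$ (resp.\ $T_p(y)$). This puts the numerator inside the square in the form
\begin{equation*}
\frac{\mu_i^p}{b_i^p}\,T_{p-1}(y) + \frac{1}{b_i^2}\,\mathbb{E}\!\left(X_i^2 \sum_{m=0}^{p-3}\frac{(yX_i/b_i)^m}{m!}\right),
\end{equation*}
and the denominator in the analogous form
\begin{equation*}
\frac{\mu_i^p}{b_i^p}\,T_p(y) + \frac{1}{b_i}\,\mathbb{E}\!\left(X_i \sum_{m=0}^{p-2}\frac{(yX_i/b_i)^m}{m!}\right).
\end{equation*}

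Next I would send $p\to\infty$ with $y=4tb_i/D_n$ held fixed (since $D_n$ does not depend on $p$). The leading tail terms vanish because $\mu_i^p/b_i^p = \mathbb{E}((X_i/b_i)^p)\le 1$, while $T_p(y)\le y^{p-1}\exp(y)/(p-1)!\to 0$, and likewise for $T_{p-1}(y)$. For the remaining expectations, the partial Taylor sums converge pointwise to $\exp(yX_i/b_i)$ and are dominated by $\exp(y)$ on the support $[0,b_i]$, so dominated convergence yields convergence to $b_i^{-2}\mathbb{E}(X_i^2\exp(yX_i/b_i))$ and $b_i^{-1}\mathbb{E}(X_i\exp(yX_i/b_i))$ respectively. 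Taking the ratio, squaring, and noting $yX_i/b_i = 4tX_i/D_n$ gives the first claim.

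For inequality (\ref{Ineq: Hoef p=infty}), Theorem \ref{Thm: concent hof} provides the bound
\begin{equation*}
\mathbb{P}(S_n - \mathbb{E}(S_n)\ge t) \le \exp\!\left(-\frac{2t^2}{\sum_{i=1}^n b_i^2\, C_p(4tb_i/D_n,b_i,\mu_i^1,\ldots,\mu_i^p)}\right)
\end{equation*}
for every positive integer $p$. The left-hand side does not depend on $p$, and the right-hand side has a limit by the computation above, so passing $p\to\infty$ immediately yields (\ref{Ineq: Hoef p=infty}) (the factor $b_i^2$ cancels against the $1/b_i^2$ in the limit of $C_p$). The proof is essentially a careful dominated-convergence exercise; the only mild obstacle is the bookkeeping of reindexing and collecting the $\mu_i^p$ terms into the Taylor remainders $T_{p-1}(y)$ and $T_p(y)$, and verifying that these remainder terms are indeed negligible in the limit—both of which follow from elementary inequalities.
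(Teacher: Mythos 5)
Your proposal is correct and follows essentially the same route as the paper: the paper's proof identifies $C_p$ (via Step 3 of the proof of Theorem \ref{Thm: concent hof}) as the squared ratio of $v^{(2)}$ to $v^{(1)}$, notes that the leading terms $b^{-p}\mu^p T_{p-1}(y)$ and $b^{-p}\mu^p T_p(y)$ vanish since $\mu^p/b^p\le 1$, and applies the bounded convergence theorem to the partial Taylor sums to get $b^{-2}\mathbb{E}(X_i^2\exp(yX_i/b_i))$ and $b^{-1}\mathbb{E}(X_i\exp(yX_i/b_i))$, exactly as you do. Your final step of passing $p\to\infty$ in the fixed-$p$ bound of Theorem \ref{Thm: concent hof} is also the intended argument for inequality (\ref{Ineq: Hoef p=infty}).
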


\textbf{Examples.} We now provide examples where our results significantly improve Hoeffding's inequality.   The first example studies a sum of uniform random variables and the second example studies confidence intervals. 

\begin{example}  \label{Exam: uniform}
 (Uniform distribution). Suppose that $X_{1},\ldots,X_{n}$ are independent continuous uniform random variables on $[0,1]$, i.e., $\mathbb{P}(X_{i} \leq t) = t$ for $0 \leq t \leq 1$. In this case, a straightforward calculation shows  that 
$$\mathbb{E}X_{i}\exp(sX_{i})= \frac{\exp(s)(s-1)+1}{s^{2}} \text{  and  } \mathbb{E}X_{i}^{2}\exp(sX_{i})= \frac{\exp(s)(s^{2}-2s+2)-2}{s^{3}}.$$ 
Using Corollary \ref{Coro: p tends to inifinity} we have
$$\lim _{p \rightarrow \infty} C_{p} \left (x ,1 , \mu^{1} _{i} , \ldots , \mu^{p}_{i} \right ) = \left ( \frac{-2 + \exp(x)(2-2x+x^2)}{x(1+\exp(x)(x-1))} \right )^{2} : = C_{\infty}(x). $$ 
Using the fact that $d (X_{i}) =  \left (\mathbb{E}X_{i}^{2} / \mathbb{E}X_{i} \right )^{2} = 4/9$ inequality (\ref{Ineq: Hoef p=infty}) yields 
 \begin{equation}\label{Ineq: uniform}
     \mathbb{P}(S_{n}-\mathbb{E}(S_{n}) \geq nt) \leq \exp \left ( -\frac{2nt^{2}}{C_{\infty}(9t)} \right ).
      \end{equation}

In Figure \ref{Fig: uniform} we plot the bound given in Hoeffding's inequality (see Theorem \ref{Thm: concent hof} inequality (\ref{Ineq: Hoef bound p=1})) for $\mathbb{P}(S_{n}-\mathbb{E}(S_{n}) \geq nt)$ divided by the bound given in (\ref{Ineq: uniform}) as a function of $t$ on the interval $[0.1,0.4]$ for $n=40$. We see that the bound given in (\ref{Ineq: uniform}) significantly improves Hoeffding's bound.

\begin{figure} [ht] 
 \begin{center}
   \includegraphics[width= 0.5 \linewidth]{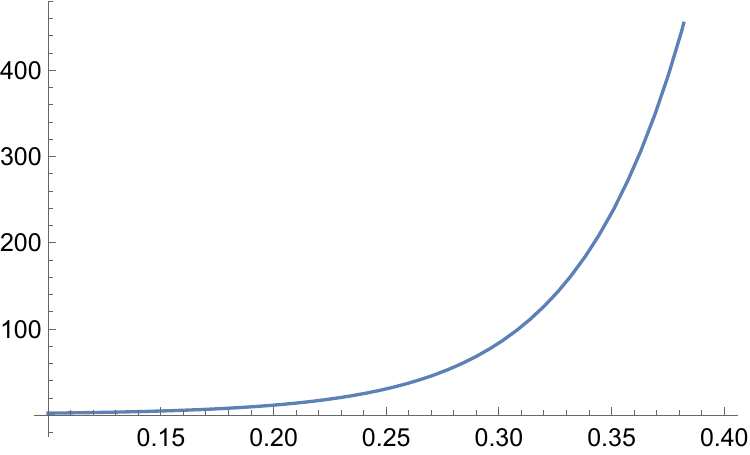}
   \caption{Comparing Hoeffding's inequality and inequality (\ref{Ineq: uniform}): $X_{i}$ is a uniform random variable on $[0,1]$ for $i=1,\ldots,40$. The plot describes the ratio of the right-hand side of inequality (\ref{Ineq: Hoef bound p=1}) (Hoeffding's inequality) to the right-hand side of inequality (\ref{Ineq: uniform}) for $n=40$. }
   \label{Fig: uniform}
    \end{center}
\end{figure}
\end{example}

\begin{example} (Confidence intervals).Suppose that we want to know how many independent and identically distributed  samples $n$ are required to acquire a $(1-\alpha)$ confidence interval around the mean $ \sum_{i=1}^{n} X_{i} /n$ of size $t$. That is, suppose that we want to find an $n$ such that
$$\delta=: \mathbb{P} \left ( \sum_{i=1}^{n} X_{i} /n \notin \left [\mathbb{E} \left( \sum_{i=1}^{n} X_{i} /n  \right )- t,\mathbb{E} \left( \sum_{i=1}^{n} X_{i} /n  \right )+ t \right ] \right )$$
is at most $\alpha$ where $X_{i}$ are independent and identically distributed random variables on $[a,b]$. Then from Remark \ref{Remark: Hoeff} part (i) for every $n$ that satisfies 
$$ n \geq \frac{\ln(2/\alpha)(b-a)^{2}\overline {C} _{p}(t,X_{i} , b-a )  } {2t^{2}}$$
we have $\delta \leq \alpha$. 
This improves the confidence interval derived by Hoeffding's inequality (which corresponds to $C_{p}=1$) by a factor of $\overline{C}_{p}$ that is given in closed-form.   This factor depends on the information that the random variable's higher moments provide. When the higher moments provide useful information and $t$ is small then this factor is significant and we need less observations to achieve a significant level of $\alpha$. 

As an example suppose that $X_{i}$ is a random variable on the interval $[0,1]$ with $\mathbb E(X_{i}) = 1/2$ and $\mathbb E(X_{i}^{2}) = 1/3$. Upon examination, it can be easily checked that $\overline{C}_{2} < 1/2$ holds for a small value of $t$ ($t<0.02$). By applying Theorem \ref{Thm: concent hof}, we find that fewer than half the samples needed when using Hoeffding's bound are required to obtain a $(1-\alpha)$ confidence interval using the generalized Hoffding's bound with $p=2$.

\end{example}

\subsection{Concentration inequalities: Bennett type inequalities} \label{Sec: concert Bennet}

In this section we derive Bennett type concentration inequalities that provide bounds on the probability that the sum of independent and bounded from above random variables differs from its expected value. The bounds depend on the random variables' first $p$ moments and are given in terms of the generalized Lambert $W$-function  \citep{scott2006general}. For real numbers $\alpha_{i}$, $i=0,\ldots,p$, $\alpha _{0} > 1$, consider the one dimensional transcendental equation: 
\begin{equation} \label{Eq: POLY-EXPO}
    \alpha_{0} - \sum _{j=1} ^{p} \alpha _{j} x^{j} = \exp(x). 
    \end{equation}
    The solutions to equation (\ref{Eq: POLY-EXPO}) are a special case of the generalized Lambert $W$-function \citep{scott2006general}. 
Because $\alpha_{0} > 1$ it is easy to see that equation (\ref{Eq: POLY-EXPO}) has a positive solution. We denote the non-empty set of positive solutions of equation (\ref{Eq: POLY-EXPO}) by $G_{p}(\alpha_{0},\ldots,\alpha_{p})$. The bounds given in Theorem \ref{Thm: concent bennett} depend on the elements of the set $G_{p}(\alpha_{0},\ldots,\alpha_{p})$ where  $\alpha_{i}$ depends on the random variables' moments.  When $p=0$ the set $G_{0}(\alpha_{0})$ consists of one element $\ln (\alpha_{0})$. When $p=1$ and assuming that $\alpha _{1} >0$, the set $G_{1}(\alpha_{0},\alpha_{1})$ consists of one element that is given in terms of the Lambert W-function. Recall that for $x \geq 0$, $y \exp (y) = x$ holds if and only if $y=W(x)$ where $W$ is the principal branch of the  Lambert W-function (see \cite{corless1996lambertw}).  Because $\alpha _{0} >1$ and assuming $\alpha _{1} >0$, the unique positive solution to the equation $\exp (x) = \alpha _{0} - \alpha_{1}x$ is given by 
$$\frac{\alpha _{0}}{\alpha_{1}} - W\left ( \frac{\exp (\alpha_{0} / \alpha_{1}) }{\alpha_{1}} \right )$$
(see \cite{corless1996lambertw}). 

Finding the positive solutions of the transcendental equation (\ref{Eq: POLY-EXPO}) for $p \geq 2$ can be done using a computer program. It involves solving an exponential polynomial equation of order $p$ that has at least one positive solution.  When the random variables have non-negative moments we show that the transcendental equation (\ref{Eq: POLY-EXPO}) has a unique positive solution (see Theorem \ref{Thm: concent bennett} part (ii)).

\begin{theorem} \label{Thm: concent bennett}
 Let $X_{1},\ldots,X_{n}$ be independent random variables on $(-\infty,b]$ for some $b>0$ and let $S_{n} = \sum _{i=1} ^{n} X_{i}$.  Let $p \geq 2$ be an integer and assume that  $X_{i} \in L^{p}$ for all $i=1,\ldots,n$. Denote $\mathbb{E}(X_{i}) = \mu ^{1}_{i}$, and assume that $\mathbb{E}(X ^{k}_{i}) \leq  \mu ^{k}_{i}$ and $0<\mathbb{E}(\max (X ^{p}_{i},0)) \leq  \mu ^{p}_{i}$  for some $\mu ^{p}_{i}$ for all $k=1,\ldots,p-1$ and all $ i=1,\ldots,n$. 
 
(i) For all $t>0$ we have
\begin{equation} \label{Ineq: Bennt concent}
\mathbb{P} (S_{n}-\mathbb{E}(S_{n}) \geq t) \leq \exp \left ( \max _{y \in  G_{p-2}(\alpha_{0},\ldots,\alpha_{p-2})} \left ( \frac{t}{b} - \left ( \frac{t}{b} + \frac{\mu^{2} }{b^{2} }\right) y +  \sum_{j=2}^{p-1} \left (\frac{\mu^{j}}{b^{j}j!} - \frac{\mu ^{j+1}}{b^{j+1}j!} \right ) y ^{j} \right ) \right )
\end{equation}
where \begin{equation*}
    \alpha_{0} = 1 + \frac{tb^{p-1}}{\mu^{p}} > 1 \text{ and } \alpha_{j} = \frac{b^{p-j-1} \mu ^{j+1}}{\mu^{p} j!} - \frac{1}{j!}  
\end{equation*} 
for all $j=1,\ldots,p-2$ and $\mu ^{k} = \sum _{i=1} ^{n} \mu _{i} ^{k} $ for all $k=1,\ldots,p$.

(ii) If $\mu^{j} \geq 0$ for every odd number $j=3,\ldots,p-1$ (for example one can choose $\mu ^{j} = \max (\sum^{n}_{i=1} \mu _{i}^{j} , 0)$) then  $G_{p-2}$ consists of one element and inequality  (\ref{Ineq: Bennt concent}) reduces to 
\begin{equation} \label{Ineq: Ben useful}
\mathbb{P} (S_{n}-\mathbb{E}(S_{n}) \geq t) \leq \exp \left (  \frac{t}{b} - \left ( \frac{t}{b} + \frac{\mu^{2} }{b^{2} }\right) y +  \sum_{j=2}^{p-1} \left (\frac{\mu^{j}}{b^{j}j!} - \frac{\mu ^{j+1}}{b^{j+1}j!} \right ) y ^{j}  \right )
\end{equation}
where $y$ is the unique element of $G_{p-2}$, i.e., $y$ is the unique positive solution of the equation $\alpha_{0} - \sum _{j=1}^{p-2} \alpha_{j}x^{j} = \exp(x)$.

(iii) Suppose that $p=2$. Then $G_{0}(\alpha_{0}) = \{\ln(\alpha_{0} ) \}$ consists of one element and inequality (\ref{Ineq: Bennt concent}) reduces to Bennett's inequality:
\begin{align}\label{Ineq: Bennett LOG p=2 CONCENT}
\begin{split}
    \mathbb{P}(S_{n}-\mathbb{E}(S_{n}) \geq t) & \leq  \exp \left (  \frac{t}{b} - \left ( \frac{t}{b} + \frac{\mu^{2} }{b^{2} }\right) \ln \left (\frac{tb}{\mu^{2}} +1 \right ) \right )  \\
    & =\exp \left (- \frac {\mu ^{2} }{b^{2}} \left ( \left (\frac{bt}{\mu^{2}} + 1 \right ) \ln \left (\frac{bt}{\mu^{2}} + 1 \right ) - \frac{bt}{\mu^{2}} \right ) \right ).
    \end{split}
\end{align} 

(iv) Suppose that $p=3$, $\alpha_{1} \neq 0$, and $\mathbb{E}(\max (X ^{3}_{i},0)) =  \mu ^{3}_{i}$ for all $ i = 1,\ldots,n$. Then $G_{1}(\alpha_{0},\alpha_{1}) =  \left \{ \frac{\alpha _{0}}{\alpha_{1}} - W\left ( \frac{\exp (\alpha_{0} / \alpha_{1}) }{\alpha_{1}} \right ) \right \}$ consists of one element and inequality (\ref{Ineq: Bennt concent}) reduces to
\begin{align} \label{Ineq: Bennett p=3 W-lamb}
\mathbb{P}(S_{n}-\mathbb{E}(S_{n}) \geq t) 
\leq \exp \left ( \frac{t}{b} - \left ( \frac{t}{b} + \frac{\mu^{2} }{b^{2} }\right) y +   \left (\frac{\mu^{2}}{2b^{2}} - \frac{\mu ^{3}}{2b^{3}} \right ) y ^{2}  \right )
\end{align}
where $y=\frac{\alpha _{0}}{\alpha_{1}} - W\left ( \frac{\exp (\alpha_{0} / \alpha_{1}) }{\alpha_{1}} \right ) $ and $W$ is the Lambert $W$-function.

\end{theorem}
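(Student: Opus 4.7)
The plan is to chain Chernoff's inequality with the moment-generating-function bound from Theorem~\ref{Coro: exp upper bound}, then optimize the exponent, which naturally produces the transcendental equation defining $G_{p-2}$. Fix $s>0$. By Markov applied to $\exp(s(S_n-\mathbb{E}(S_n)))$ together with independence,
\begin{equation*}
\mathbb{P}(S_n-\mathbb{E}(S_n)\geq t)\leq e^{-st-s\mathbb{E}(S_n)}\prod_{i=1}^n \mathbb{E}\exp(sX_i).
\end{equation*}
Into each factor I plug (\ref{Ineq: MGF bound}), upper-bounding $\mathbb{E}(X_i^k)$ by $\mu_i^k$ (directionally valid since $s>0$) and applying $1+x\leq e^x$ to obtain $\mathbb{E}\exp(sX_i)\leq\exp(s\mu_i^1+\sum_{j=2}^{p-1}\frac{s^j\mu_i^j}{j!}+\frac{\mu_i^p}{b^p}T_{p+1}(sb))$. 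Multiplying over $i$ with $\mu^k=\sum_i\mu_i^k$, combining with $e^{-st-s\mathbb{E}(S_n)}$ so that the linear-in-$\mathbb{E}(X_i)$ terms cancel, substituting $y=sb$, and expanding $T_{p+1}(y)=e^y-\sum_{j=0}^{p-1}y^j/j!$ yields, for every $y>0$,
\begin{equation*}
\mathbb{P}(S_n-\mathbb{E}(S_n)\geq t)\leq \exp F(y),\quad F(y):=-\frac{ty}{b}+\frac{\mu^p}{b^p}(e^y-1-y)+\sum_{j=2}^{p-1}\frac{y^j}{j!}\Bigl(\frac{\mu^j}{b^j}-\frac{\mu^p}{b^p}\Bigr).
\end{equation*}

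Now I would minimize $F$ on $(0,\infty)$. Setting $F'(y)=0$ and multiplying by $b^p/\mu^p$, the index shift $k=j-1$ turns the critical-point equation into $e^y=\alpha_0-\sum_{k=1}^{p-2}\alpha_k y^k$ with precisely the $\alpha_j$ of the theorem; hence every positive critical point of $F$ lies in $G_{p-2}$. Substituting $e^{y^*}=\alpha_0-\sum_k\alpha_k(y^*)^k$ back into $F(y^*)$, the identity $\frac{\mu^p}{b^p}\alpha_0=\frac{\mu^p}{b^p}+\frac{t}{b}$ produces the constant $t/b$; the $k=1$ polynomial term cancels the $-\frac{\mu^p}{b^p}y^*$ arising from $-1-y^*$ and contributes $-\frac{\mu^2}{b^2}y^*$; and for $j=2,\ldots,p-1$ the $(y^*)^j$ coefficients telescope into $(\mu^j/b^j-\mu^{j+1}/b^{j+1})/j!$, yielding exactly the claimed expression. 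Since $\mathbb{P}(\ldots)\leq\exp F(y^*)$ holds for each individual $y^*\in G_{p-2}$, it also holds with the maximum over $G_{p-2}$, establishing part~(i). (A short analysis of $F$ near $0$ and $\infty$ in fact shows the global minimizer on $(0,\infty)$ lies in $G_{p-2}$, so the max could be replaced by the min for a tighter bound.)

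Parts~(iii) and~(iv) are direct specializations of this optimization: $p=2$ annihilates the polynomial side, leaving $e^y=\alpha_0$ and $G_0=\{\ln\alpha_0\}$, which recovers Bennett's inequality algebraically; $p=3$ with $\alpha_1\neq 0$ reduces the transcendental equation to $e^y=\alpha_0-\alpha_1 y$, and the substitution $u=\alpha_0/\alpha_1-y$ puts it in the form $ue^u=e^{\alpha_0/\alpha_1}/\alpha_1$, so $y=\alpha_0/\alpha_1-W(e^{\alpha_0/\alpha_1}/\alpha_1)$. For part~(ii) I would prove uniqueness of the positive solution of $g(y):=e^y-\alpha_0+\sum_k\alpha_k y^k=0$ from $g(0)=1-\alpha_0<0$ and $g(y)\to +\infty$, combined with a sign/convexity analysis of the polynomial part that uses the hypothesis $\mu^j\geq 0$ for odd $j\geq 3$ to show $g$ is eventually strictly increasing and has no additional sign changes. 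The main obstacle is twofold: first, the algebraic telescoping that carries $F(y^*)$ into the stated closed form is delicate and requires tracking the cancellation of all $\mu^p$-proportional terms across the index shift; second, the uniqueness claim in part~(ii) is subtle because the individual $\alpha_k$ are not signed from the hypotheses, so the odd-moment nonnegativity must be deployed precisely to rule out oscillations of the polynomial side.
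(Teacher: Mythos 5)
Your proposal follows essentially the same route as the paper: bound each $\mathbb{E}\exp(sX_i)$ via Theorem \ref{Coro: exp upper bound} (replacing moments by their upper bounds, which is valid since all coefficients are nonnegative for $s\geq 0$), exponentiate with $1+x\leq e^x$, apply Chernoff, and optimize the exponent so that the first-order condition is exactly the generalized Lambert equation defining $G_{p-2}$; the back-substitution and telescoping you describe is precisely the paper's computation, and your observation that the bound holds for each individual $y\in G_{p-2}$ (hence for the max) is if anything a cleaner justification of the $\max$ than arguing about which critical point is the global optimizer. The one place your argument is genuinely incomplete is part (ii): you correctly note that the individual $\alpha_k$ are not signed, but you leave the uniqueness argument as a sketch. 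The paper closes this by not working with the $\alpha_k$ at all: writing the derivative of the exponent as $\frac{t}{\mu^{p}}-\frac{1}{\mu^{p}}\sum_{j=1}^{p-2}\frac{x^{j}\mu^{j+1}}{j!}-T_{p}(x)$, every $\mu^{j+1}$ with $j+1$ even is automatically nonnegative and the odd ones are nonnegative by hypothesis, while $T_{p}$ is strictly increasing on $(0,\infty)$; so the derivative is strictly decreasing, positive at $0$, and tends to $-\infty$, giving a unique positive zero. You should make that rewriting explicit rather than attempting a sign analysis of the polynomial $\sum_k \alpha_k y^k$ itself.
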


The proof of Theorem \ref{Thm: concent bennett} consists of three steps. In the first step we bound the moment-generating function of a random variable $X$ that is bounded from above using the first $p$ moments of $X$. We use Theorem \ref{Coro: exp upper bound} to prove the first step. In the second step we derive an exponential bound on the moment-generating function using the elementary inequality $1+x \leq \exp(x)$ for all $x \in \mathbb{R}$. We note that in some cases this inequality is loose and and so the second step may potentially be improved (for example see \cite{jebara2018refinement} and \cite{zheng2018improved}). In the third step we apply the Chernoff bound to derive the concentration inequality.

\section{Conclusions}

We provide upper bounds on the moment-generating function of a random variable that is bounded from above  using information on the random variable's higher moments  (see Theorem \ref{Coro: exp upper bound}). Using these bounds and their convexity properties, we generalize and improve Hoeffding's inequality (see Theorem \ref{Thm: concent hof}) and Bennett's inequality (see Theorem \ref{Thm: concent bennett}) for the case that some information on the random variables' higher moments is available. Our bounds are simple to use and are given as  closed-form expressions in most cases.

\section{Proofs} \label{Sec: Proofs}

\subsection{Proofs of the results in Section \ref{Sec: bounds on the MGF}}

\begin{proof} [Proof of Theorem \ref{Coro: exp upper bound}]
Clearly Theorem \ref{Coro: exp upper bound} holds for $s=0$. Fix $s > 0$, $b>0$ and a positive integer $p$. Consider the function $g(x)=:T_{p+1}(x)/x^{p}$ on $(-\infty,\infty)$ where we define $$g(0) =: \frac{1}{p!} = \lim _{x \rightarrow 0 } g(x).$$ 

The proof proceeds with the following steps: 

\textbf{Step 1.} We have $g(x) \leq g(0)$ for all $x<0$.

\textbf{Proof of Step 1.}
First note that for $x \leq 0$ we have $T_{p} \leq 0$ if $p$ is an even number and  $T_{p} \geq 0$ if $p$ is an odd number (to see this note that $T_{1}(x) = \exp(x) \geq 0$, $T_{p}(0)=0$ and $T^{(1)}_{p} = T_{p-1}$ for all $p \geq 2$).  We now show that $g(x) \leq g(0)$ for all $x < 0$.

Suppose  that $x < 0$. If $p$ is an even number then $T_{p+1} (x)/ x ^{p} \leq 1/p!$ if and only if $T_{p+1} (x) - x ^{p}/p!  \leq 0$.  The last inequality is equivalent to $T_{p+2}(x) \leq 0$ which holds because $p$ is an even number. Similarly, if $p$ is an odd number then $T_{p+1} (x)/ x ^{p} \leq 1/p!$ if and only if $T_{p+2}(x) \geq 0$ which holds because $p$ is an odd number. 
 Thus, $g(x) \leq g(0)$ for all $x < 0$. 
 
 \textbf{Step 2.} Let $f,k:[a,b) \rightarrow \mathbb{R}$ be continuously differentiable functions such that $k^{(1)}(x) \neq 0$ for all $x \in (a,b)$. If $f^{(1)}/k^{(1)}$ is increasing on $(a,b)$ then $(f(x)-f(a))/(k(x)-k(a))$ is increasing in $x$ on $(a,b)$. 

\textbf{Proof of Step 2.} Step 2 is known as the L’Hospital rule for monotonicity. For a proof see Lemma 2.2 in \cite{anderson1993inequalities}.

 \textbf{Step 3.} The function $g$ is increasing on $(0,y)$ for all $y>0$. 
 
 \textbf{Proof of  Step 3.} Let $y>0$ and note that the function $T_{1}(x)/p!=\exp(x)/p!$ is increasing on $(0,y)$. Using Step 2 with $f(x)=\exp(x)$ and $k(x)=p!x$ implies that the function $$\frac{\exp(x)-1}{p!x}= \frac{T_{2}(x)}{p!x}$$ is increasing on $(0,y)$. Applying again Step 2 and using the facts that $T_{k+1}^{(1)} = T_{k}$ and  $T_{k}(0)=0$ for all $k=2,\ldots$ implies that the function  $T_{k}(x)/(x^{k-1}p!/(k-1)!)$ is increasing in $x$ on $(0,y)$ for all $k=2,\ldots$. Choosing $k=p+1$ shows that $g$ is increasing on $(0,y)$.
 
 \textbf{Step 4.} We have 
    \begin{equation*}
     T_{p+1}(sx) \leq  \frac{\max (x^{p},0)}{b^{p}}T_{p+1}(sb)
       \end{equation*}
       for all $ x \leq b$.
       
\textbf{Proof of Step 4.} Step 3 shows that $g$ is an increasing function on $(0,b]$. Hence, $g(x) \leq g(b)$ for all $x \in (0,b]$. Because $g$ is a continuous function we have $g(0) \leq g(b)$. Using Step 1 implies that $g(x) \leq g(b)$ for all $x \leq b$.
  
Let $x \leq b$ and assume $x \neq 0$. Multiplying each side of the inequality $g(sx) \leq g(sb)$  by the positive number $\max (x^{p},0)$ yields
   \begin{equation*}
      \frac{\max (x^{p},0)}{x^{p}}T_{p+1}(sx) \leq  \frac{\max (x^{p},0)}{b^{p}}T_{p+1}(sb).
       \end{equation*}
      Note that
       $$  T_{p+1}(sx) \leq   \frac{\max (x^{p},0)}{x^{p}}T_{p+1}(sx). $$ The last inequality holds as equality if $x > 0$ or if $p$ is an even number. If $x<0$ and $p$ is an odd number, then $T_{p+1}(sx) \leq 0$ (see Step 1), so the last inequality holds. 
     We conclude that  
    \begin{equation*}
     T_{p+1}(sx) \leq  \frac{\max (x^{p},0)}{b^{p}}T_{p+1}(sb)
       \end{equation*}
       for all $ x \leq b$.  
       
       To prove Theorem \ref{Coro: exp upper bound} apply Step 4 to conclude that
       \begin{equation*}
    \exp(sx) \leq  \frac{\max (x^{p},0)}{b^{p}}T_{p+1}(sb) + \sum_{j=0}^{p-1} \frac{s^{j}x^{j}}{j!}
       \end{equation*} 
       for all $x \leq b$. 
       Taking expectations in both sides of the last inequality proves Theorem  \ref{Coro: exp upper bound}.
\end{proof}

\begin{proof}[Proof of Proposition \ref{prop: m(p)}]
Let $p \geq 2$ be an even number. 

(i) We have  
\begin{align*}
    & \frac{\mathbb{E}(X^{p})}{b^{p}} T_{p+1}(sb)  + \mathbb{E} \left ( \sum _{j=0} ^{p-1} \frac{s^{j}X^{j}} {j!} \right )   \geq   \frac{\mathbb{E} \max (X,0)^{p+1}}{b^{p+1}} T_{p+2}(sb)  + \mathbb{E} \left ( \sum _{j=0} ^{p} \frac{s^{j}X^{j}} {j!} \right ) \\
    & \Longleftrightarrow \frac{\mathbb{E} (X^{p})}{b^{p}} \left (T_{p+1}(sb) - \frac{s^{p}b^{p}}{p!} \right ) \geq \frac{\mathbb{E} \max (X,0)^{p+1}}{b^{p+1}} T_{p+2}(sb) \\
    &  \Longleftrightarrow b{\mathbb{E} (X^{p})} \geq {\mathbb{E} \max (X,0)^{p+1}}
    \end{align*}
    which holds for a random variable $X$ on $(-\infty,b]$ and an even number $p$ because $bx^{p} \geq \max(x,0)^{p+1}$ for all $x \leq b$. 
    
    (ii) Similarly to part (i) we have $m_{X,s}(p+1) \geq m_{X,s}(p+2)$ if and only if  $b{\mathbb{E} X^{p+1}} \geq {\mathbb{E} X^{p+2}}$ 
    which holds for a non-negative random variable because $b x^{p+1} \geq x^{p+2}$ for all $0 \leq x \leq b$. 
\end{proof}

\subsection{Proofs of the results in Section \ref{Sec: concert Hoeffiding}}
\begin{proof} [Proof of Theorem \ref{Thm: concent hof}]
 We will use the following notations in proof.
Let $X$ be a random variable on $[0,b]$ with $\mathbb{P}(X>0) > 0$. Denote $\mathbb{E}(X^{k}) = \mu ^{k}$ for all $k=1,\ldots,p$. 

For every integer $p \geq 1$ we define the function 
$$ v(y,b,\mu^{1},\ldots,\mu^{p}) =:   \frac{\mu^{p}}{b^{p}} T_{p+1}(y) + 
 \sum _{j=0} ^{p-1} \frac{y^{j}\mu^{j}} {b^{j}j!}.$$
 For all $x \geq 0$ we define the function
 \begin{equation} \label{Equation Cp}
     C_{p}(x,b,\mu ^{1},\ldots, \mu ^{p})=\max _{0 \leq y \leq x}  \frac{(v^{(2)}(y,b,\mu ^{1},\ldots, \mu ^{p}))^{2}}{(v^{(1)}(y,b,\mu ^{1},\ldots, \mu ^{p}))^{2}}.
 \end{equation}

We denote by $v^{(k)}(y,b,\mu^{1},\ldots,\mu^{p})$ the $k$th derivative of $v$ with respect to its first argument. A straightforward calculation shows
\begin{equation*}
    v^{(k)}(y,b,\mu^{1},\ldots,\mu^{p}) = \frac{\mu^{p} } {b^{p}} T_{p+1-k}(y) + \sum _{j=0} ^{p-1-k} \frac{\mu^{j+k}y^{j}} {b^{j+k}j!}.  
\end{equation*}
Thus, for $p \geq 2$ we have
$$v^{(1)}(0,b,\mu^{1},\ldots,\mu^{p})= \frac{\mu^{1} }{b} > 0 \text{ and } v^{(2)}(0,b,\mu^{1},\ldots,\mu^{p})=\frac{\mu^{2}}{b^{2}}  > 0.$$ Because $v^{(2)}$ and $v^{(1)}$ are increasing in the first argument as the sum of increasing functions, we conclude that  $v^{(2)}(y,b,\mu^{1},\ldots,\mu^{p})$ and $v^{(1)}(y,b,\mu^{1},\ldots,\mu^{p})$ are positive for every $y \geq 0$. 

The proof proceeds with the following steps: 

\textbf{Step 1.} We have $\mu^{d+2}\mu^{d} \geq (\mu^{d+1})^{2}$
for every positive integer $d$. 

\textbf{Proof of Step 1.} Let $d$ be a positive integer. From the Cauchy-Schwarz inequality for the (positive) random variables $X^{(d+2)/2}$ and $X^{d/2}$  we have
$$ \mathbb{E}X^{d/2} X^{(d+2)/2} \leq \sqrt{\mathbb{E}X^{d} \mathbb{E} X^{d+2}}.$$
That is, we have $\mu^{d+2}\mu^{d} \geq (\mu^{d+1})^{2}$ which proves Step 1.

\textbf{Step 2.} For every positive integer $p$ the function $v^{(1)}(y,b,\mu^{1},\ldots,\mu^{p})$ is log-convex in $y$ on $(0,\infty)$ (i.e., $\log(v^{(1)}(y,b,\mu^{1},\ldots,\mu^{p}))$ is a convex function on $(0,\infty)$).  

\textbf{Proof of Step 2.} Fix a positive integer $p$. Let 
\begin{equation*}
   w(y)=: T_{p}(y) + \frac{b^{p}}{\mu^{p}}\sum _{j=0} ^{p-2} \frac{\mu^{j+1}y^{j}} {b^{j+1}j!} = \frac{b^{p} v^{(1)}(y,b,\mu^{1},\ldots,\mu^{p})}{\mu^{p}}.  
\end{equation*}
To prove Step 2 it is enough to prove that $w$ is log-convex on $(0,\infty)$. 
Note that 
$$ w(y) = \exp(y) + \sum _{j=0} ^{p-2} \frac{y^{j}} {j!}\beta_{j+1}$$
where 
$$\beta_{j} = \frac{b^{p-j}\mu^{j}}{\mu^{p}} - 1$$
for $j=1,\ldots,p$. 
We have $\beta_{j} \geq 0$ for all $j=1,\ldots,p$. To see this note that $x^{j}b^{p-j} \geq x^{p}$ for all $x \in [0,b]$ so taking expectations implies that $\beta_{j} \geq 0$.

$w$ is log-convex on $(0,\infty)$ if and only if 
$w^{(1)}/w$ is increasing on $(0,\infty)$. For every integer $k=0,\ldots,p-2$ define the function 
$$ w_{k}(y) = \exp(y) + \sum _{j=0} ^{k} \frac{y^{j}} {j!}\beta_{p-1+j-k}$$
and note that $w_{p-2} = w$. By construction we have $w_{k}^{(1)} = w_{k-1}$ We now show that $w_{k}$ is log-convex on $(0,\infty)$ for all $k=0,\ldots,p-2$. The proof is by induction.  

For $k=0$ the function 
$$ \frac{w^{(1)}_{0}(y)}{w_{0}(y)} = \frac{\exp(y)}{\exp(y) + \beta_{p-1}} $$
is increasing because $ \beta_{p-1} \geq 0$ and the function $x/(x+d)$ is increasing in $x$ on $[0,\infty)$ when $d \geq 0$. We conclude that
the function $w_{0} (y) = \exp(y) + \beta_{p-1}$ is log-convex on $(0,\infty)$. 

Assume that $w_{k}$ is log-convex on $(0,\infty)$ for some integer $0 \leq k \leq p-3$. We show that $w_{k+1}$ is log-convex on $(0,\infty)$. Log-convexity of  $w_{k}$  implies that the function 
\begin{equation} \label{Log-convex1}
    \frac{w^{(1)}_{k}(y)}{w_{k}(y)} = \frac{\exp(y) + \sum _{j=0} ^{k-1} \frac{y^{j}} {j!}\beta_{p+j-k}}{\exp(y) + \sum _{j=0} ^{k} \frac{y^{j}} {j!}\beta_{p-1+j-k}}
\end{equation}
is increasing on $(0,\infty)$. Using the fact that $w^{(1)}_{k+1}=w_{k}$ and applying Step 2 in the proof of Theorem \ref{Coro: exp upper bound} we conclude that the function 
$$ m(y)=: \frac{w_{k}(y) - w_{k}(0)}{w_{k+1}(y) - w_{k+1}(0)} = \frac{\exp(y) + \sum _{j=0} ^{k} \frac{y^{j}} {j!}\beta_{p-1+j-k} - (1+\beta_{p-1-k})}{\exp(y) + \sum _{j=0} ^{k+1} \frac{y^{j}} {j!}\beta_{p-2+j-k} - (1+\beta_{p-2-k})}$$ 
is increasing on $(0,\infty)$. Thus, $m^{(1)}(y) \geq 0$ for all $y \in (0,\infty)$. That is, 
\begin{equation} \label{Log-convex2}
w_{k+1}^{(2)}(y) w_{k+1}(y) -w_{k+1}^{(2)}(y)(1+\beta_{p-2-k}) \geq (w_{k+1}^{(1)}(y))^{2} - w_{k+1}^{(1)}(y) (1+\beta_{p-1-k}) 
\end{equation}
for all $y \in (0,\infty)$. 
We now show that $w_{k+1}^{(2)}(y) w_{k+1}(y) \geq (w_{k+1}^{(1)}(y))^{2}$. Because $w_{k+1}^{(2)}/w_{k+1}^{(1)}$ is increasing and positive (see (\ref{Log-convex1})) we have  
\begin{equation}\label{Log-convex3} \frac{w_{k+1}^{(2)}(y)}{w_{k+1}^{(1)}(y)}(1+\beta_{p-2-k}) \geq (1+\beta_{p-1-k})
\end{equation}
for all $y \in (0,\infty)$ if the last inequality holds for $y=0$, i.e., if
\begin{align*} 
(1+\beta_{p-k}) (1+ \beta_{p-2-k}) \geq (1+ \beta_{p-1-k})^{2} & \Longleftrightarrow  \left (\frac{b^{k}\mu^{p-k}}{\mu^{p}} \right ) \left (\frac{b^{k+2}\mu^{p-2-k}}{\mu^{p}} \right ) \geq \left (\frac{b^{k+1}\mu^{p-k-1}}{\mu^{p}} \right )^{2} \\
& \Longleftrightarrow \mu^{p-k} \mu^{p-k-2} \geq (\mu^{p-k-1})^{2}
\end{align*}
which holds from Step 1. We conclude that inequality (\ref{Log-convex3}) holds. Using inequality (\ref{Log-convex2}) we have 
$$ w_{k+1}^{(2)}(y) w_{k+1}(y) - (w_{k+1}^{(1)}(y))^{2} \geq w_{k+1}^{(2)}(y)(1+\beta_{p-2-k}) - w_{k+1}^{(1)}(y)(1+\beta_{p-1-k}) \geq 0. $$
That is, $w_{k+1}^{(2)}(y) w_{k+1}(y) \geq (w_{k+1}^{(1)}(y))^{2}$ for all $y \in (0,\infty)$. We conclude that $w^{(1)}_{k+1}/w_{k+1}$ is increasing on $(0,\infty)$, i.e., $w_{k+1}$ is log-convex. This shows that $w_{k}$ is log-convex for all $k=0,\ldots,p-2$. In particular, $w_{p-2}=:w$ is log-convex which proves Step 2.

\textbf{Step 3.} 
We have
\begin{equation*}
    C_{p}(x,b,\mu^{1},\ldots,\mu^{p}) = \left ( \frac{\exp(x) + \sum _{j=0} ^{p-3} \frac{x^{j}} {j!} \left (  \frac{b^{p-j-2}\mu^{j+2}}{\mu^{p}} - 1 \right )}{\exp(x) + \sum _{j=0} ^{p-2} \frac{x^{j}} {j!} \left (  \frac{b^{p-j-1}\mu^{j+1}}{\mu^{p}} - 1 \right )} \right )^{2}. 
\end{equation*}
for all $x \geq 0$ where $C_{p}$ is given in Equation (\ref{Equation Cp}). 

\textbf{Proof of Step 3.} Let $x \geq 0$. From Step 2 the function $v^{(1)}(y,b,\mu ^{1},\ldots, \mu ^{p})b^{p}/\mu^{p}:=w(y)$ is log-convex on $(0,x)$ where $w$ is defined in the proof of Step 2. This implies that
$$   C_{p}(x,b,\mu ^{1},\ldots, \mu ^{p})=\max _{0 \leq y \leq x}  \frac{(v^{(2)}(y,b,\mu ^{1},\ldots, \mu ^{p}))^{2}}{(v^{(1)}(y,b,\mu ^{1},\ldots, \mu ^{p}))^{2}} = \max _{0 \leq y \leq x} \left (\frac{w^{(1)}(y)} {w(y)} \right )^{2} = \left (\frac{w^{(1)}(x)} {w(x)} \right )^{2} $$
which proves Step 3.

\textbf{Step 4.}  For all $x \geq 0$ we have
 \begin{equation*}
      \max _{ 0 \leq y \leq x} \left ( \frac{v^{(2)}(y,b,\mu ^{1},\ldots, \mu ^{p})}{v(y,b,\mu ^{1},\ldots, \mu ^{p})} - \frac{(v^{(1)}(y,b,\mu ^{1},\ldots, \mu ^{p}))^{2}}{(v(y,b,\mu ^{1},\ldots, \mu ^{p}))^{2}}  \right ) \leq \frac{1}{4} C_{p}(x,b,\mu ^{1},\ldots, \mu ^{p}). 
 \end{equation*}
 \textbf{Proof of Step 4.} For all $x > 0$ we have
\begin{align*}
  & \max _{ 0 \leq y \leq x} \left ( \frac{v^{(2)}(y,b,\mu ^{1},\ldots, \mu ^{p})}{v(y,b,\mu ^{1},\ldots, \mu ^{p})} - \frac{(v^{(1)}(y,b,\mu ^{1},\ldots, \mu ^{p}))^{2}}{(v(y,b,\mu ^{1},\ldots, \mu ^{p}))^{2}}  \right )\\
& = \max _{ 0 \leq y \leq x}  \left ( \frac{v^{(2)}(y,b,\mu ^{1},\ldots, \mu ^{p})}{v(y,b,\mu ^{1},\ldots, \mu ^{p})} \left (1 - \frac{v^{(2)}(y,b,\mu ^{1},\ldots, \mu ^{p})(v^{(1)}(y,b,\mu ^{1},\ldots, \mu ^{p}))^{2}}{v(y,b,\mu ^{1},\ldots, \mu ^{p})(v^{(2)}(y,b,\mu ^{1},\ldots, \mu ^{p}))^{2}} \right ) \right )  \\ 
& \leq \max _{0 \leq y \leq x} \left ( \frac{v^{(2)}(y,b,\mu ^{1},\ldots, \mu ^{p})}{v(y,b,\mu ^{1},\ldots, \mu ^{p})} \left (1 - \frac{v^{(2)}(y,b,\mu ^{1},\ldots, \mu ^{p})}{v(y,b,\mu ^{1},\ldots, \mu ^{p}) C_{p}(x,b,\mu ^{1},\ldots, \mu ^{p})} \right ) \right )  \\
& \leq \frac{1}{4} C_{p}(x,b,\mu ^{1},\ldots, \mu ^{p}).
\end{align*}
 The first inequality follows from the definition of $C_{p}$ and because $v^{(2)} > 0$ and $v^{(1)} > 0$. The second inequality follows from the elementary inequality $x(1-x/z) \leq 0.25z$ for all $z>0$ and $x>0$.

\textbf{Step 5.} For $s \geq 0$ we have 
\begin{align*}
    \mathbb{E} \exp (s(X-\mathbb{E}(X)) \leq \exp \left ( \frac{s^2 b^{2}C_{p}(sb,b,\mu ^{1}, \ldots , \mu ^{p} ) }{8} \right )
\end{align*} 
\textbf{Proof of Step 5.} From Theorem \ref{Coro: exp upper bound} for all $s \geq 0$ we have 
\begin{align*}\mathbb{E}\exp (sX) & \leq  \frac{\mathbb{E} X ^{p} } {b^{p}} T_{p+1}(sb) + \mathbb{E} \left ( \sum _{j=0} ^{p-1} \frac{s^{j}X^{j}} { j!} \right ) \\
& = v(y,b,\mu^{1},\ldots,\mu^{p})
\end{align*} 
where $y=sb \geq 0$.  
Define the function
\begin{align*}
    g(y) & = \ln \left (  v(y,b,\mu^{1},\ldots,\mu^{p})
 \right ).
\end{align*}
Clearly $v$ is a positive function so the function $g :\mathbb{R}_{ +} \rightarrow \mathbb{R}$ is well defined. Note that $\mathbb{E}\exp (sX) \leq \exp (g(y))$. 
Recall that
 $v^{(1)}(0,b,\mu^{1},\ldots,\mu^{p})= \mathbb{E}(X)/b$. 
Because $v(0,b,\mu^{1} , \ldots , \mu^{p}) = 1$ we have $g(0)  =\ln (1) =0$. We have
\begin{align*}
g^{(1)}(y) =  \frac{v^{(1)}(y,b, \mu^{1},\ldots,\mu^{p})}{v(y,b,\mu^{1},\ldots,\mu^{p})}.
\end{align*}
Thus, $g^{(1)}(0) = \mathbb{E}(X)/b$.
Differentiating again yields
\begin{align*}
g^{ (2) }(y) & = 
\frac{v^{(2)}(y,b,\mu ^{1},\ldots, \mu ^{p})}{v(y,b,\mu ^{1},\ldots, \mu ^{p})} - \frac{(v^{(1)}(y,b,\mu ^{1},\ldots, \mu ^{p}))^{2}}{(v(y,b,\mu ^{1},\ldots, \mu ^{p}))^{2}}. 
\end{align*}

From Taylor's theorem for all $y \geq 0$ there exists a $z \in [0,y]$ such that $g(y) = g(0) + yg^{(1)}(0) + 0.5 y^2 g^{(2)}(z) $. Thus, using the fact that $y=sb$ we have 
\begin{align*}
    g(y) = g(0) + yg^{(1)}(0) + 0.5 y^2 g^{(2)}(z) = s\mathbb{E}(X)+ 0.5 s^2 b^{2} g^{(2)}(z) \leq  s\mathbb{E}(X) + 0.5  s^2 b^{2}  V(sb,b,\mu ^{1}, \ldots , \mu ^{p} ) 
\end{align*}
where 
$$V(y,b,\mu ^{1}, \ldots , \mu ^{p} ) = \sup _{0 \leq z \leq y} g^{(2)} (z). $$ 
Using $\mathbb{E}\exp(sX) \leq \exp (g(y))$ and Step 4 imply
\begin{align*}
    \mathbb{E} \exp (s(X-\mathbb{E}(X)) & \leq  \exp \left ( \frac{ s^2 b^{2}V(sb,b,\mu ^{1}, \ldots , \mu ^{p} ) } {2} \right ) \\
    & \leq \exp \left ( \frac{s^2 b^{2}C_{p}(sb,b,\mu ^{1}, \ldots , \mu ^{p} ) }{8} \right ).
\end{align*} 
 
\textbf{Step 6.} For all $t>0$ we have 
\begin{equation*}
     \mathbb{P}(S_{n}-\mathbb{E}(S_{n}) \geq t) \leq \exp \left (- \frac{t^{2}}{2 \sum_{i=1}^{n} b^{2} _{i} C_{p} \left (\frac{4tb_{i}}{\sum _{j=1} ^{n} d(X_{j}) },b_{i}, \mu  ^{1}_{i}, \ldots , \mu ^{p}_{i} \right ) } \right ).
\end{equation*}

\textbf{Proof of Step 6.}
Using independence, Step 5, and Markov's inequality, a standard argument shows that: 
\begin{align*}
    \mathbb{P}(S_{n}-\mathbb{E}(S_{n}) \geq t) & \leq \exp(-st) \mathbb{E} \exp (s(S_{n}-\mathbb{E}(S_{n})) \\
    & = \exp(-st)  \prod_{i=1}^{n} \mathbb{E} \exp (s(X_{i}-\mathbb{E}(X_{i})) \\
    & \leq \exp(-st)  \prod_{i=1}^{n}   \exp \left ( \frac{s^2 b^{2}_{i} C_{p}(sb_{i},b_{i},\mu ^{1}_{i}, \ldots , \mu ^{p}_{i} ) }{8} \right )\\
    & = \exp \left (-st + \frac{s^{2} } {8} \sum_{i=1}^{n} b^{2}_{i}  C_{p} (sb_{i},b_{i}, \mu  ^{1}_{i}, \ldots , \mu ^{p}_{i} ) \right ).  
\end{align*}
Let
$$ s = \frac{4t}{\sum _{i=1} ^{n} b^{2}_{i} C_{p} \left (\frac {4tb_{i} } { \sum _{j=1}^{n} d(X_{j})},b_{i}, \mu  ^{1}_{i}, \ldots , \mu ^{p}_{i} \right )}.$$
Note that
$$C_{p}(0,b_{i},\mu ^{1}_{i}, \ldots , \mu ^{p}_{i} ) =  \left ( \frac{\mu^{2}_{i}}{\mu^{1}_{i} b_{i}}  \right )^{2} = \frac {d(X_{i})} {b_{i}^{2}}.$$ 
Because $C_{p}$ is increasing in the first argument for all $y \geq 0$, we have  
\begin{equation} \label{Eq: Vp proof H}
    C_{p}(y,b_{i},\mu ^{1}_{i}, \ldots , \mu ^{p}_{i} ) \geq  \frac {d(X_{i})} {b_{i}^{2}} > 0.
\end{equation}
Thus,
 $$ sb_{i} = \frac{4tb_{i}}{\sum _{i=1} ^{n} b^{2} _{i}  C_{p} \left (\frac {4tb_{i} } { \sum _{j=1}^{n} d(X_{j})},b_{i}, \mu  ^{1}_{i}, \ldots , \mu ^{p}_{i} \right )} \leq \frac{4tb_{i}}{\sum _{j=1}^{n} d(X_{j})}.$$
 Using again the fact that $C_{p}$ is increasing in the first argument implies 
 \begin{align*}
    -st + \frac{ s^{2} } {8} \sum_{i=1}^{n}  b^{2} _{i} C_{p}(sb_{i},b_{i} , \mu  ^{1}_{i}, \ldots , \mu ^{p}_{i} )   & \leq  -st + \frac{s^{2} }{8} \sum_{i=1}^{n} b^{2} _{i}  C_{p} \left (\frac{4tb_{i}}{\sum _{j=1} ^{n} d(X_{j}) },b_{i},\mu  ^{1}_{i}, \ldots , \mu ^{p}_{i} \right ) \\
    & = - \frac{2t^{2}}{\sum_{i=1}^{n} b^{2} _{i} C_{p} \left (\frac{4tb_{i}}{\sum _{j=1} ^{n} d(X_{j})  },b_{i},\mu  ^{1}_{i}, \ldots , \mu ^{p}_{i} \right ) }.
\end{align*}
We conclude that
\begin{equation} \label{Hof theorem Conc1}
    \mathbb{P}(S_{n}-\mathbb{E}(S_{n}) \geq t) \leq \exp \left (- \frac{2t^{2}}{\sum_{i=1}^{n} b^{2} _{i} C_{p} \left (\frac{4tb_{i}}{\sum _{j=1} ^{n} d(X_{j}) },b_{i}, \mu  ^{1}_{i}, \ldots , \mu ^{p}_{i} \right ) } \right ).
\end{equation}
which proves Step 6.

Combining Steps 3 and 6 proves part (i). 

(ii) Let $X$ be a random variable on $[0,b]$. Denote $\mathbb{E}(X^{k}) = \mu ^{k}$ for all $k=1,\ldots,p$.  Clearly $0 < C_{p}$ because $v^{(2)}$ and $v^{(1)}$ are positive functions (see part (i)).

 We show that $v^{(2)}(y,b,\mu ^{1},\ldots, \mu ^{p}) \leq v^{(1)}(y,b,\mu ^{1},\ldots, \mu ^{p})$ for all $y \geq 0$. 

We have $v^{(2)}(y,b,\mu ^{1},\ldots, \mu ^{p}) \leq v^{(1)}(y,b,\mu ^{1},\ldots, \mu ^{p})$ if and only if 
$$ \frac{\mu ^{p}}{b^{p}}T_{p-1}(y) +  \sum _{j=0} ^{p-3} \frac{y^{j}\mu^{j+2}} {b^{j+2}j!} \leq  \frac{\mu ^{p}}{b^{p}}T_{p}(y) +  \sum _{j=0} ^{p-2} \frac{y^{j}\mu^{j+1}} {b^{j+1}j!}. $$
The last inequality holds if and only if 
\begin{align*}
    & \frac{ \mu ^{p} y^{p-2}}{b^{p}(p-2)!}  +   \sum _{j=0} ^{p-3} \frac{y^{j}\mu ^{j+2}} {b^{j+2}j!}  -   \sum _{j=0} ^{p-2} \frac{y^{j}\mu^{j+1}} {b^{j+1}j!}  \leq 0 \\ 
    & \iff   \sum _{j=0} ^{p-2} \frac{y^{j}\mu^{j+2}} {b^{j+2}j!} -   \sum _{j=0} ^{p-2} \frac{y^{j}\mu ^{j+1}} {b^{j+1}j!}  \leq 0. 
\end{align*}
To see that the last inequality holds let $0 \leq x \leq b$. We have  $b^{j+1}x^{j+2} \leq x^{j+1}b^{j+2}$. Taking expectations and multiplying by $y^{j} / j!$ show that
\begin{equation*}
\frac{y^{j}\mu^{j+2}} {b^{j+2}j!} \leq \frac{y^{j}\mu^{j+1}} {b^{j+1}j!}
\end{equation*}
for all $1 \leq j \leq p-2$ and all $y \geq 0$. 
We conclude that $v^{(2)}(y,b,\mu ^{1},\ldots, \mu ^{p}) \leq v^{(1)}(y,b,\mu ^{1},\ldots, \mu ^{p})$ for all $y \geq 0$. Thus, 
$$   C_{p}(x,b,\mu ^{1},\ldots, \mu ^{p})=\max _{0 \leq y \leq x}  \frac{(v^{(2)}(y,b,\mu ^{1},\ldots, \mu ^{p}))^{2}}{(v^{(1)}(y,b,\mu ^{1},\ldots, \mu ^{p}))^{2}} \leq 1$$
which immediately implies that inequality (\ref{Ineq: Hoef bound}) is tighter then Hoeffding's inequality which corresponds to $C_{p}=1$ (when $p=1$ the argument above shows that $v^{2} = v^{1}$ so $C_{1} = 1$ and we derive Hoeffding's inequality (\ref{Ineq: Hoef bound p=1})). 
\end{proof} 

\begin{proof}[Proof of Corollary \ref{Coro: Special case}]
Under the Corollary's assumption we have $4t / d(X_{i}) \leq c$. Because the function $C_{p}$ is increasing in the first argument (see the proof of Theorem \ref{Thm: concent hof}) we have 
$$( I_{p}(X_{i},c) )^{2} = \frac{1} {C_{p}(c,1,\mathbb{E}(X_{i}),\ldots,\mathbb{E}(X^{p}_{i}))} \leq   \frac{1} {C_{p}(4t/d(X_{i}),1,\mathbb{E}(X_{i}),\ldots,\mathbb{E}(X^{p}_{i}))}.$$
Combining the inequality above and Remark \ref{Remark: Hoeff} part (ii) proves the result. 
\end{proof}

\begin{proof}[Proof of Corollary \ref{Coro: Talag}]
Theorem 3.3 in \cite{talagrand1995missing} shows that there exists a universal constant $K$ such that for all $t \leq \sigma ^{2}/Kb$ we have 
$$ \mathbb{P} \left ( S_{n} \geq t \right)  \leq \inf _{s \geq 0}\exp   \left ( \sum_{i=1}^{n} \ln \mathbb{E} \exp(sX_{i}) - st \right ) \left (\theta \left ( \frac{t}{\sigma} \right ) + \frac{Kb}{\sigma}\right ) . $$ The result now follows from Steps 5 and 6 in the proof of Theorem \ref{Thm: concent hof}. 
\end{proof}

\begin{proof}[Proof of Corollary \ref{Coro: p tends to inifinity}]
 Let $Z$ be a random variable on $[0,b]$. Denote $\mathbb{E}(Z^{k}) = \mu ^{k}$ for all $k=1,\ldots$ and let $y \in [0,x]$ for some $x \geq 0$. 

First note that $0 \leq \lim _{p \rightarrow \infty}   b^{-p}\mu^{p}T_{p}(y) \leq  \lim _{p \rightarrow \infty} T_{p}(y) = 0$. 
Because $Z$ is a random variable on $[0,b]$ we can use the bounded convergence theorem to conclude that
\begin{align*}
\lim _{p \rightarrow \infty} v^{(1)}(y,b,\mu ^{1},\ldots, \mu ^{p}) =  \lim_{p \rightarrow \infty} \left ( \frac{\mu ^{p}}{b^{p}}T_{p}(y) +  \sum _{j=0} ^{p-2} \frac{y^{j}\mu^{j+1}} {b^{j+1}j!} \right )  = b^{-1}\mathbb{E}Z \exp(yZ/b). 
\end{align*}
Similarly, 
\begin{align*}
\lim _{p \rightarrow \infty} v^{(2)}(y,b,\mu ^{1},\ldots, \mu ^{p}) = \lim _{p \rightarrow \infty} \left (  \frac{\mu ^{p}}{b^{p}}T_{p-1}(y) +  \sum _{j=0} ^{p-3} \frac{y^{j}\mu^{j+2}} {b^{j+2}j!} \right ) = b^{-2}\mathbb{E}Z^{2} \exp(yZ/b). 
\end{align*}
We conclude that 
$$ \lim _{p \rightarrow \infty} \left ( \frac{ v^{(2)}(y,b,\mu ^{1},\ldots, \mu ^{p})} {v^{(1)}(y,b,\mu ^{1},\ldots, \mu ^{p})} \right ) ^{2} = \left (  \frac{b^{-2}\mathbb{E}Z^{2} \exp(yZ/b)}{b^{-1}\mathbb{E}Z \exp(yZ/b)} \right )^{2} = b^{-2}   \left (  \frac{\mathbb{E}Z^{2} \exp(yZ/b)}{\mathbb{E}Z \exp(yZ/b)} \right )^{2}$$

Using Step 3 in the proof of Theorem \ref{Thm: concent hof} part (i) yields 
\begin{align*}
    \lim _{p \rightarrow \infty} C_{p} (x,b_{i},\mu^{1},\ldots,\mu^{p}) & = \lim _{p \rightarrow \infty} \max _{0 \leq y \leq x} \left ( \frac{ v^{(2)}(y,b,\mu ^{1},\ldots, \mu ^{p})} {v^{(1)}(y,b,\mu ^{1},\ldots, \mu ^{p})} \right ) ^{2} \\
    & =   \lim _{p \rightarrow \infty} \left ( \frac{ v^{(2)}(x,b,\mu ^{1},\ldots, \mu ^{p})} {v^{(1)}(x,b,\mu ^{1},\ldots, \mu ^{p})} \right )^{2} \\
    & =  b^{-2}   \left (  \frac{\mathbb{E}Z^{2} \exp(xZ/b)}{\mathbb{E}Z \exp(xZ/b)} \right )^{2}
\end{align*}
which proves the result. 
\end{proof}

\subsection{Proofs of the results in Section \ref{Sec: concert Bennet}}

\begin{proof}[Proof of Theorem \ref{Thm: concent bennett}]
(i) Let $ s \geq 0$ and let $p \geq 2$ be an integer.  We first assume that $b=1$ so that $X_{i}$  is a random variable on $(-\infty,1]$ for all $i=1,\ldots,n$.  

For any random variable $X_{i}$ on $(-\infty,1]$ we have
\begin{align*}
   \mathbb{E}\exp (sX_{i})  & \leq \mu ^{p}_{i} \left (\exp(s) - \sum _{j=0} ^{p-1} \frac{s^{j}}{j!}  \right ) + 1 + \sum_{j=1}^{p-1} \frac{s^{j}\mu ^{j}_{i}}{j!} \\
    & \leq \exp \left (\mu ^{p}_{i}\left (\exp(s) - \sum _{j=0} ^{p-1} \frac{s^{j}}{j!}  \right ) + \sum_{j=1}^{p-1} \frac{s^{j}\mu ^{j}_{i}}{j!} \right ) \\
    & = \exp \left (\mu ^{p}_{i} T_{p+1}(s) + \sum_{j=1}^{p-1} \frac{s^{j}\mu ^{j}_{i}}{j!} \right ).
\end{align*}
 The first inequality follows from Theorem \ref{Coro: exp upper bound} and the fact that $T_{p+1}(s) \geq 0$ for $s \geq 0$. The second inequality follows from the elementary inequality $1 + x \leq e^{x}$ for all $x \in \mathbb{R}$. Thus, 
\begin{equation*}
     \mathbb{E}\exp (s(X_{i}-\mu ^{1}_{i})) \leq  \exp \left (\mu ^{p}_{i} T_{p+1}(s) + \sum_{j=2}^{p-1} \frac{s^{j}\mu ^{j}_{i}}{j!}  \right ) 
\end{equation*}
and
\begin{align*}
\prod_{i=1}^{n} \mathbb{E} \exp (s(X_{i}-\mathbb{E}(X_{i})) & \leq \prod_{i=1}^{n}  \exp \left (\mu ^{p}_{i} T_{p+1}(s) + \sum_{j=2}^{p-1} \frac{s^{j}\mu ^{j}_{i}}{j!} \right ) \\
& = \exp \left (\mu ^{p}T_{p+1}(s) + \sum_{j=2}^{p-1} \frac{s^{j}\mu ^{j}}{j!} \right ).
\end{align*}

From the Chernoff bound and the fact that $X_{1},\ldots,X_{n}$ are independent random variables, for all $t > 0$, we have 
\begin{align*}
\mathbb{P}(S_{n}-\mathbb{E}(S_{n}) \geq t) & \leq  \inf _{s \geq 0} \exp(-st) \mathbb{E} \exp (s(S_{n}-\mathbb{E}(S_{n})) \\
    & = \inf _{s \geq 0} \exp(-st)  \prod_{i=1}^{n} \mathbb{E} \exp (s(X_{i}-\mathbb{E}(X_{i})) \\
    & \leq \inf _{s \geq 0}  \exp \left (-st + \sum_{j=2}^{p-1} \frac{s^{j}\mu ^{j}}{j!} + \mu ^{p}  T_{p+1}(s)  \right ) \\ 
    & =   \exp \left ( - \sup _{s \geq 0} \left (  st - \sum_{j=2}^{p-1} \frac{s^{j}\mu ^{j}}{j!} -  \mu ^{p}  T_{p+1}(s)  \right )  \right ) \\ 
    & = \exp \left (- \mu^{p} \sup _{x \geq 0} h_{p} (x,t, \mu ^{2},\ldots,\mu^{p} ) \right )
\end{align*}
where 
\begin{align*}
    h_{p}(x,t,\mu ^{2},\ldots,\mu^{p} ) & =  \frac{t}{\mu^{p}}x -  \frac{1}{\mu ^{p}} \sum_{j=2}^{p-1} \frac{x^{j}\mu ^{j}}{ j!} - T_{p+1}(x)  \\
    & = 1  + \left ( \frac{t}{\mu^{p}} + 1\right ) x - \sum_{j=2}^{p-1} \left (\frac{\mu ^{j}}{\mu^{p} j!} - \frac{1}{j!}   \right ) x^{j} -  \exp(x).
    \end{align*} 
    Because $h_{p}$ is continuous, $h_{p}(0,t,\mu ^{2},\ldots,\mu^{p}) = 0$, and $\lim _{x \rightarrow \infty} h_{p}(x,t,\mu ^{2},\ldots,\mu^{p}) = -\infty$, the function $h_{p}$ has a maximizer. Let $h^{(j)}_{p}$ the $j$th derivative of $h_{p}$ with respect to $x$.

Note that 
\begin{align*}
    h^{(1)}_{p}(x,t,\mu ^{2},\ldots,\mu^{p} ) & =  \frac{t}{\mu^{p}} + 1 -  \sum_{j=1}^{p-2} \left (\frac{\mu ^{j+1}}{\mu ^{p} j!} - \frac{1}{j!}   \right ) x^{j} -  \exp(x)  \\
    & = \alpha_{0} - \sum _{j=1}^{p-2} \alpha_{j} x^{j} - \exp(x)
    \end{align*}
Thus, $h^{(1)}_{p} (0,t,\mu ^{2},\ldots,\mu^{p} ) = \alpha_{0} - \exp(0) > 0 $ and $h^{(1)}_{p} (x,t,\mu ^{2},\ldots,\mu^{p} ) < 0$ for all $x \geq \overline{x}$ for some large $\overline{x}$. Because $h^{(1)}_{p}$ is continuous we conclude that a maximizer $y$ of $h_{p}$ on $[0,\infty)$ satisfies $h^{(1)}_{p} (y,t,\mu ^{2},\ldots,\mu^{p} ) = 0$, that is, $y \in G_{p-2}(\alpha_{0},\ldots,\alpha_{p-2})$.  Plugging $y$ into $h_{p}$ yields
\begin{align*}
   & \left ( \frac{t}{\mu^{p}} + 1\right ) y -  \sum_{j=2}^{p-1} \left (\frac{\mu ^{j}}{\mu^{p} j!} - \frac{1}{j!}   \right ) y^{j} + 1 -  \exp(y) \\
    & =  \left ( \frac{t}{\mu^{p}} + 1\right ) y - \sum_{j=2}^{p-1} \left (\frac{\mu ^{j}}{\mu^{p} j!} - \frac{1}{j!}   \right ) y^{j} -  \frac{t}{\mu^{p}} +  \sum_{j=1}^{p-2} \left (\frac{\mu ^{j+1}}{ \mu^{p} j!} - \frac{1}{j!}   \right ) y^{j} \\  
    & = -  \frac{t}{\mu^{p}} + \left ( \frac{t}{\mu^{p}} + \frac{\mu^{2}}{\mu^{p}}\right ) y -  \frac{1}{\mu^{p}} \sum_{j=2}^{p-1} \left (\frac{\mu ^{j}}{ j!} - \frac{\mu^{j+1}}{j!}   \right ) y^{j}.
\end{align*}
In the first equality we used the fact that $h^{(1)}_{p} (y,t,\mu ^{2},\ldots,\mu^{p} ) = 0$. 
Thus, 
\begin{align}\label{Bennett Proof1}
\begin{split}
    \mathbb{P}(S_{n}-\mathbb{E}(S_{n}) \geq t) & \leq \exp \left ( - \mu^{p} \left ( -  \frac{t}{\mu^{p}} + \left ( \frac{t}{\mu^{p}} + \frac{\mu^{2}}{\mu^{p}}\right ) y -  \frac{1}{\mu^{p}} \sum_{j=2}^{p-1} \left (\frac{\mu ^{j}}{ j!} - \frac{\mu^{j+1}}{j!}   \right ) y^{j}\right )  \right ) \\
    & = \exp \left ( \max _{y \in  G_{p-2}(\alpha_{0},\ldots,\alpha_{p-2})} \left ( t - \left ( t + \mu^{2}\right ) y + \sum_{j=2}^{p-1} \left (\frac{\mu ^{j}}{ j!} - \frac{\mu^{j+1}}{j!}   \right ) y^{j} \right ) \right ) 
    \end{split}
\end{align}
which proves part (i) for the case that $b=1$. Now suppose that $b \neq 1$ and $X_{i} \leq b$ for some $b>0$. Define the random variable $Y_{i} = X_{i}/b$ and note that $Y_{i} \leq 1$ and $ \mathbb{E}Y_{i}^{k} \leq \mu ^{k}_{i}/b^{k}$. Thus, we can apply inequality (\ref{Bennett Proof1}) for the random variables $Y_{1},\ldots,Y_{n}$ to conclude that for all $t>0$ we have
\begin{align*}
\begin{split}
& \mathbb{P} (S_{n}-\mathbb{E}(S_{n}) \geq t) = \mathbb{P} \left (\sum _{i=1} ^{n} Y_{i} -\mathbb{E}\left (\sum _{i=1} ^{n} Y_{i} \right ) \geq \frac{t}{b} \right ) \\
& \leq \exp \left ( \max _{y \in  G_{p-2}(\alpha_{0},\ldots,\alpha_{p-2})} \left ( \frac{t}{b} - \left ( \frac{t}{b} + \frac{\mu^{2} }{b^{2} }\right) y +  \sum_{j=2}^{p-1} \left (\frac{\mu^{j}}{b^{j}j!} - \frac{\mu ^{j+1}}{b^{j+1}j!} \right ) y ^{j} \right ) \right )
\end{split}
\end{align*}
where \begin{equation*}
    \alpha_{0} = 1 + \frac{tb^{p-1}}{\mu^{p}} > 1 \text{ and } \alpha_{j} = \frac{b^{p-j-1} \mu ^{j+1}}{\mu^{p} j!} - \frac{1}{j!}  
\end{equation*} 
for all $j=1,\ldots,p-2$. This proves part (i).

(ii) Suppose for simplicity that $b=1$ (as in part (i) part (ii) holds for any $b>0$ when it holds for $b=1$). Note that
\begin{equation*}
     h^{(1)}_{p}(x,t,\mu ^{2},\ldots,\mu^{p} )  =  \frac{t}{\mu^{p}} -  \frac{1}{\mu ^{p}} \sum_{j=1}^{p-2} \frac{x^{j}\mu ^{j+1}}{ j!} - T_{p}(x) 
\end{equation*}
so if $\mu^{j} \geq 0$ for every odd number $j \geq 3$, $j \neq p$, then $h^{(1)}_{p}$ is strictly decreasing on $(0,\infty)$. Hence, there is a unique positive solution for the equation   $h^{(1)}_{p}(x,t,\mu ^{2},\ldots,\mu^{p} ) = 0$ which implies that the set $G_{p-2}(\alpha_{0},\ldots,\alpha_{p-2})$ consists only one element (see the proof of part (i)). 

  (iii) Assume that $p=2$. Then the unique solution to the equation $\alpha_{0} = \exp(x)$ is $\ln(\alpha_{0})$. Thus,   $G_{2}(\alpha_{0}) = \{ y \} $ where $$y=\ln \left (1 + \frac{tb}{\mu^{2}} \right ).$$ Plugging $y$ into inequality (\ref{Ineq: Ben useful}) proves part (iii). 
  
  (iv) Assume that $p=3$. From part (ii) $G_{3}$ consists of one element. Note that $bx^{2} \geq \max (x^{3},0)$ for all $x \leq b$. Thus, $b\mu^{2}_{i} \geq \mu^{3}_{i}$ for all $i=1,\ldots,n$. Hence, $\alpha _{1}$ is non-negative. Because $\alpha _{0} >1$ and $\alpha _{1} >0$ (if $\alpha_{1}=0$ we get Bennett's inequality as in part (iii)),  $G_{3}(\alpha_{0},\alpha_{1}) = \{ y \} $ where $y$ is the unique and  positive solution to the equation $\exp (x) = \alpha _{0} - \alpha_{1}x$ that is given by $$y=\frac{\alpha _{0}}{\alpha_{1}} - W\left ( \frac{\exp (\alpha_{0} / \alpha_{1}) }{\alpha_{1}} \right )$$
  where $W$ is the Lambert $W$-function (see \cite{corless1996lambertw}). Plugging $y$ into inequality (\ref{Ineq: Ben useful}) proves part (iv).
\end{proof}

\bibliographystyle{ecta}
\bibliography{Concent}

\begin{thebibliography}{30}
\newcommand{\enquote}[1]{``#1''}
\expandafter\ifx\csname natexlab\endcsname\relax\def\natexlab#1{#1}\fi

\bibitem[\protect\citeauthoryear{Alon and Spencer}{Alon and
  Spencer}{2004}]{alon2004probabilistic}
\textsc{Alon, N. and J.~H. Spencer} (2004): \emph{The probabilistic method},
  John Wiley \& Sons.

\bibitem[\protect\citeauthoryear{Anderson, Vamanamurthy, and Vuorinen}{Anderson
  et~al.}{1993}]{anderson1993inequalities}
\textsc{Anderson, G., M.~Vamanamurthy, and M.~Vuorinen} (1993):
  \enquote{Inequalities for quasiconformal mappings in space,} \emph{Pacific
  Journal of Mathematics}, 160, 1--18.

\bibitem[\protect\citeauthoryear{Bennett}{Bennett}{1962}]{bennett1962probability}
\textsc{Bennett, G.} (1962): \enquote{Probability inequalities for the sum of
  independent random variables,} \emph{Journal of the American Statistical
  Association}, 57, 33--45.

\bibitem[\protect\citeauthoryear{Bentkus}{Bentkus}{2004}]{bentkus2004hoeffding}
\textsc{Bentkus, V.} (2004): \enquote{On Hoeffding’s inequalities,} \emph{The
  Annals of Probability}, 32, 1650--1673.

\bibitem[\protect\citeauthoryear{Boucheron, Lugosi, and Massart}{Boucheron
  et~al.}{2013}]{boucheron2013concentration}
\textsc{Boucheron, S., G.~Lugosi, and P.~Massart} (2013): \emph{Concentration
  inequalities: A nonasymptotic theory of independence}, Oxford university
  press.

\bibitem[\protect\citeauthoryear{Bousquet}{Bousquet}{2002}]{bousquet2002bennett}
\textsc{Bousquet, O.} (2002): \enquote{A Bennett concentration inequality and
  its application to suprema of empirical processes,} \emph{Comptes Rendus
  Mathematique}, 334, 495--500.

\bibitem[\protect\citeauthoryear{Cohen, Rabinovich, Schuster, and
  Shachnai}{Cohen et~al.}{1999}]{cohen1999optimal}
\textsc{Cohen, A., Y.~Rabinovich, A.~Schuster, and H.~Shachnai} (1999):
  \enquote{Optimal bounds on tail probabilities: a study of an approach,} in
  \emph{Advances in Randomized Parallel Computing}, Springer, 1--24.

\bibitem[\protect\citeauthoryear{Corless, Gonnet, Hare, Jeffrey, and
  Knuth}{Corless et~al.}{1996}]{corless1996lambertw}
\textsc{Corless, R.~M., G.~H. Gonnet, D.~E. Hare, D.~J. Jeffrey, and D.~E.
  Knuth} (1996): \enquote{On the LambertW function,} \emph{Advances in
  Computational mathematics}, 5, 329--359.

\bibitem[\protect\citeauthoryear{Fan, Grama, and Liu}{Fan
  et~al.}{2012}]{fan2012hoeffding}
\textsc{Fan, X., I.~Grama, and Q.~Liu} (2012): \enquote{Hoeffding’s
  inequality for supermartingales,} \emph{Stochastic Processes and their
  Applications}, 122, 3545--3559.

\bibitem[\protect\citeauthoryear{Freedman}{Freedman}{1975}]{freedman1975tail}
\textsc{Freedman, D.~A.} (1975): \enquote{On tail probabilities for
  martingales,} \emph{the Annals of Probability}, 100--118.

\bibitem[\protect\citeauthoryear{Hoeffding}{Hoeffding}{1994}]{hoeffding1994probability}
\textsc{Hoeffding, W.} (1994): \enquote{Probability inequalities for sums of
  bounded random variables,} in \emph{The Collected Works of Wassily
  Hoeffding}, Springer, 409--426.

\bibitem[\protect\citeauthoryear{Jebara}{Jebara}{2018}]{jebara2018refinement}
\textsc{Jebara, T.} (2018): \enquote{A refinement of Bennett's inequality with
  applications to portfolio optimization,} \emph{arXiv preprint
  arXiv:1804.05454}.

\bibitem[\protect\citeauthoryear{Jiang, Sun, and Fan}{Jiang
  et~al.}{2018}]{jiang2018bernstein}
\textsc{Jiang, B., Q.~Sun, and J.~Fan} (2018): \enquote{Bernstein's inequality
  for general Markov chains,} \emph{arXiv preprint arXiv:1805.10721}.

\bibitem[\protect\citeauthoryear{Junge and Zeng}{Junge and
  Zeng}{2013}]{junge2013noncommutative}
\textsc{Junge, M. and Q.~Zeng} (2013): \enquote{Noncommutative bennett and
  rosenthal inequalities,} \emph{The Annals of Probability}, 41, 4287--4316.

\bibitem[\protect\citeauthoryear{Klein and Rio}{Klein and
  Rio}{2005}]{klein2005concentration}
\textsc{Klein, T. and E.~Rio} (2005): \enquote{Concentration around the mean
  for maxima of empirical processes,} \emph{The Annals of Probability}, 33,
  1060--1077.

\bibitem[\protect\citeauthoryear{Kontorovich and Ramanan}{Kontorovich and
  Ramanan}{2008}]{kontorovich2008concentration}
\textsc{Kontorovich, L.~A. and K.~Ramanan} (2008): \enquote{Concentration
  inequalities for dependent random variables via the martingale method,}
  \emph{The Annals of Probability}, 36, 2126--2158.

\bibitem[\protect\citeauthoryear{Massart}{Massart}{2000}]{massart2000some}
\textsc{Massart, P.} (2000): \enquote{Some applications of concentration
  inequalities to statistics,} in \emph{Annales de la Facult{\'e} des sciences
  de Toulouse: Math{\'e}matiques}, vol.~9, 245--303.

\bibitem[\protect\citeauthoryear{Mohri, Rostamizadeh, and Talwalkar}{Mohri
  et~al.}{2018}]{mohri2018foundations}
\textsc{Mohri, M., A.~Rostamizadeh, and A.~Talwalkar} (2018): \emph{Foundations
  of machine learning}, MIT press.

\bibitem[\protect\citeauthoryear{Pandit and Meyn}{Pandit and
  Meyn}{2006}]{pandit2006worst}
\textsc{Pandit, C. and S.~Meyn} (2006): \enquote{Worst-case large-deviation
  asymptotics with application to queueing and information theory,}
  \emph{Stochastic processes and their applications}, 116, 724--756.

\bibitem[\protect\citeauthoryear{Paulin}{Paulin}{2015}]{paulin2015concentration}
\textsc{Paulin, D.} (2015): \enquote{Concentration inequalities for Markov
  chains by Marton couplings and spectral methods,} \emph{Electronic Journal of
  Probability}, 20.

\bibitem[\protect\citeauthoryear{Pelekis, Ramon, and Wang}{Pelekis
  et~al.}{2015}]{pelekis2015bernstein}
\textsc{Pelekis, C., J.~Ramon, and Y.~Wang} (2015): \enquote{On the
  Bernstein-Hoeffding method,} \emph{arXiv preprint arXiv:1503.02284}.

\bibitem[\protect\citeauthoryear{Pepin}{Pepin}{2021}]{pepin2021concentration}
\textsc{Pepin, B.} (2021): \enquote{Concentration inequalities for additive
  functionals: A martingale approach,} \emph{Stochastic Processes and their
  Applications}, 135, 103--138.

\bibitem[\protect\citeauthoryear{Pinelis}{Pinelis}{1994}]{pinelis1994optimum}
\textsc{Pinelis, I.} (1994): \enquote{Optimum bounds for the distributions of
  martingales in Banach spaces,} \emph{The Annals of Probability}, 1679--1706.

\bibitem[\protect\citeauthoryear{Pinelis}{Pinelis}{2014}]{pinelis2014bennett}
---\hspace{-.1pt}---\hspace{-.1pt}--- (2014): \enquote{On the Bennett-Hoeffding
  inequality,} in \emph{Annales de l'IHP Probabilit{\'e}s et statistiques},
  vol.~50, 15--27.

\bibitem[\protect\citeauthoryear{Pinelis and Utev}{Pinelis and
  Utev}{1990}]{pinelis1990exact}
\textsc{Pinelis, I. and S.~Utev} (1990): \enquote{Exact exponential bounds for
  sums of independent random variables,} \emph{Theory of Probability \& Its
  Applications}, 34, 340--346.

\bibitem[\protect\citeauthoryear{Roussas}{Roussas}{1996}]{roussas1996exponential}
\textsc{Roussas, G.~G.} (1996): \enquote{Exponential probability inequalities
  with some applications,} \emph{Lecture Notes-Monograph Series}, 303--319.

\bibitem[\protect\citeauthoryear{Scott, Mann, and Martinez~Ii}{Scott
  et~al.}{2006}]{scott2006general}
\textsc{Scott, T.~C., R.~Mann, and R.~E. Martinez~Ii} (2006): \enquote{General
  relativity and quantum mechanics: towards a generalization of the Lambert W
  function A Generalization of the Lambert W Function,} \emph{Applicable
  Algebra in Engineering, Communication and Computing}, 17, 41--47.

\bibitem[\protect\citeauthoryear{Talagrand}{Talagrand}{1995}]{talagrand1995missing}
\textsc{Talagrand, M.} (1995): \enquote{The missing factor in Hoeffding's
  inequalities,} in \emph{Annales de l'IHP Probabilit{\'e}s et statistiques},
  vol.~31, 689--702.

\bibitem[\protect\citeauthoryear{Victor}{Victor}{1999}]{victor1999general}
\textsc{Victor, H.} (1999): \enquote{A general class of exponential
  inequalities for martingales and ratios,} \emph{The Annals of Probability},
  27, 537--564.

\bibitem[\protect\citeauthoryear{Zheng}{Zheng}{2018}]{zheng2018improved}
\textsc{Zheng, S.} (2018): \enquote{An improved Bennett's inequality,}
  \emph{Communications in Statistics-Theory and Methods}, 47, 4152--4159.

\end{thebibliography}

\end{document}